\documentclass{amsart} 

\usepackage{graphicx}
\usepackage{amsmath}
\usepackage{amssymb}
\usepackage{ascmac}
\usepackage{amsthm}

\newtheorem{thm}{Theorem}[section]
\newtheorem{lem}{Lemma}[section]
\newtheorem{prop}{Proposition}[section]
\newtheorem{cor}{Corollary}[section]
\newtheorem{defn}{Definition}[section]
\newtheorem{rem}{Remark}[section]


\title[A construction of special Lagrangian submanifolds]{A construction of special Lagrangian submanifolds by generalized perpendicular symmetries}

\author{Akifumi Ochiai}
\date{}

\subjclass{53C38}
\keywords{special Lagrangian submanifold, Calabi-Yau manifold, minimal submanifold, moment map}
\email{akfm.oc@gmail.com, a-ochiai@tmu.ac.jp}







\sloppy

\begin{document}
\maketitle

\begin{abstract}
We show a method to construct a special Lagrangian submanifold $L^\prime$ from a given special Lagrangian submanifold $L$ in a Calabi-Yau manifold with the use of generalized perpendicular symmetries. We use moment maps of the actions of Lie groups, which are not necessarily abelian. By our method, we construct some non-trivial examples in non-flat Calabi-Yau manifolds $\mathrm{T}^\ast S^n$ which equipped with the Stenzel metrics.
\end{abstract}

\section{Introduction}\label{Section.A}

In 1982, Harvey and Lawson introduced a special class of submanifolds, namely calibrated submanifolds in their paper \cite{Harvey and Lawson}. Calibrated submanifolds has a strong property that they realize volume minimizing submanifolds in the homological class. Particularly, in Calabi-Yau manifolds calibrated submanifolds which have half-dimensions are defined and they are called special Lagrangian submanifolds. Because special Lagrangian submanifolds play an important role for understanding mirror symmetries and the SYZ-conjecture, many mathematicians pay attention to their constructions and singularities.   

Let us review the history of constructions of special Lagrangian submanifolds, regarding their ambient spaces and methods of constructions. At first $\mathbb{C}^n$ was chosen for an ambient space and in there various examples and methods of constructing special Lagrangian submanifolds were given by Joyce in a series of his papers \cite{Joyce2001}--\cite{Joyce2008}. On the other hand, Stenzel gave examples of non-flat Calabi-Yau structures on the conormal bundles over compact rank one symmetric spaces. Next special Lagrangian submanifolds are constructed in those spaces (first in $\mathrm{T}^\ast S^n$, and recently in $\mathrm{T}^\ast \mathbb{C}P^n$).

One of the useful method of constructing special Lagrangian submanifolds is called moment map techniques which were introduced by Joyce in \cite{Joyce2002}. This method needs large symmetries, and by using these symmetries we can reduce PDEs for being special Lagrangian submanifolds to ODEs on the orbit spaces. Using this method, Joyce constructed special Lagrangian submanifolds in $\mathbb{C}^n(\cong\mathrm{T}^\ast\mathbb{R}^n)$ invariant under a subgroup of $SU(n)$. With this method special Lagrangian submanifolds were also studied in $\mathrm{T}^\ast S^n$ by Anciaux \cite{Anciaux}, Ionel and Min-Oo \cite{Ionel and Min-Oo}, Hashimoto and Sakai \cite{Hashimoto and Sakai}, Hashimoto and Mashimo \cite{Hashimoto and Mashimo}, and in $\mathrm{T}^\ast\mathbb{C}P^n$ by Arai and Baba \cite{Arai and Baba}. All of these examples were cohomogeneity one.

Another method was introduced by Harvey and Lawson \cite{Harvey and Lawson} which is called bundle techniques. With the use of this method, Karigiannis and Min-Oo \cite{Karigiannis and Min-Oo} constructed special Lagrangian submanifolds in $\mathrm{T}^\ast S^n$, and Ionel and Ivey \cite{Ionel and Ivey} in $\mathrm{T}^\ast\mathbb{C}P^n$.

Aside from these two typical methods, Joyce \cite{Joyce2002} showed a way to construct a special Lagrangian submanifold $L^\prime$ in $\mathbb{C}^n$ from another given special Lagrangian submanifold $L$ by using actions of an abelian group which acts perpendicularly to $L$. This method has advantage that we need not deal with the PDE for $L^\prime$ to be a special Lagrangian submanifold (it is ``already achieved'' by the given special Lagrangian submanifold $L$), and that large symmetries are not necessarily needed.

In this paper we generalize  this Joyce's result above using ``perpendicular symmetries'' in three points. Firstly we generalize ambient spaces to general Calabi-Yau manifolds from $\mathbb{C}^n$. Secondly we do not assume the commutativity of Lie groups. Thirdly we generalize the condition that the group acts perpendicularly to a given special Lagrangian submanifold. By this method we also construct non-trivial examples of special Lagrangian submanifolds in Calabi-Yau manifolds $\mathrm{T}^\ast S^n$ equipped with the Stenzel metrics. 

The method to construct special Lagrangian submanifolds in this paper is summarized as follows: Let $(M,I,\omega,\Omega)$ be a connected Calabi-Yau manifold and $H$ a connected Lie group which acts on $M$ preserving $I$. Let $\mathfrak{h}$ and $\mathfrak{h}^\ast$ be the Lie algebra of $H$ and its dual respectively. Assume the $H$-action is Hamiltonian, i.e. $(M, \omega, H)$ has a moment map $\mu:M\to\mathfrak{h}^\ast$. Let $L$ be a special Lagrangian submanifold of $(M, I, \omega, \Omega)$ and $Z(\mathfrak{h}^\ast)$ the center of $\mathfrak{h}^\ast$. Suppose that for $c\in Z(\mathfrak{h}^\ast)$, $V_c$ is a submanifold of $M$ which satisfies $V_c\subset \mu^{-1}(c)\cap L$ and $\dim H + \dim V_c =\frac{1}{2}\dim M$.  Assume that the actions of $H$ are ``(generalized) perpendicular actions'' for $V_c$ (not necessarily for whole of $L$). Then $H\cdot V_c$ is a special Lagrangian submanifold.

Konno \cite{Konno} showed in general Calabi-Yau manifolds a method of constructing Lagrangian mean curvature flows by using perpendicular actions of abelian groups for given special Lagrangian submanifolds, and constructed some examples. This paper is inspired from the study by Konno. 

\section{Preliminaries}\label{Section.B}

In this section, we review some fundamental facts about Calabi-Yau manifolds, their special Lagrangian submanifolds, group actions, and moment maps.

\subsection{Special Lagrangian submanifolds}\label{Section.B.1}

We begin with the definition of Lagrangian submanifolds in symplectic manifolds.

Let $(M,\omega)$ be a symplectic manifold. A submanifold $L$ of $(M,\omega)$ is \textit{isotropic} if $\omega|_L\equiv0$. If an isotropic submanifold $L$ is of half-dimension of $\dim M$, it is called a \textit{Lagrangian submanifold}.

Next we see the definition of special Lagrangian submanifolds. It is a particular submanifold of a Calabi-Yau manifold which is defined as follows:
\begin{defn}
A \textit{Calabi-Yau manifold} is a quadruple $(M,I,\omega,\Omega)$ such that $(M,I)$ is a complex manifold equipped with a K\"{a}hler form $\omega$ and a holomorphic volume form $\Omega$ which satisfy the following relation:
$$\frac{\omega^n}{n!}=(-1)^{\frac{n(n-1)}{2}}\left(\frac{\sqrt{-1}}{2}\right)^n\Omega\wedge\overline{\Omega}.$$
\end{defn}
If $L$ is an oriented Lagrangian submanifold of a Calabi-Yau manifold $(M,I,\omega, \Omega)$, there exists a function $\theta:L\to\mathbb{R}/2\pi\mathbb{Z}$, which is called the \textit{Lagrangian angle} satisfying
$$\iota^\ast\Omega=e^{\sqrt{-1}\theta}\mathrm{vol}_{\iota^\ast g}.$$
Here $g$ is the K\"{a}hler metric, $\iota:L\to M$ is an embedding, and $\mathrm{vol}_{\iota^\ast g}$ is the volume form on $L$ with respect to the induced metric $\iota^\ast g$. Even if $L$ is not orientable, we can locally define the Lagrangian angle with the formula above. With the use of the Lagrangian angle $\theta$ of a Lagrangian submanifold $L$, the mean curvature vector $\mathcal{H}_p$ at $p\in L$ is expressed as follows:
$$\mathcal{H}_p=I_{\iota(p)}(\iota_{\ast p}(\nabla_{\iota^\ast g}\theta)_p)\in\mathrm{T}^\perp_{\iota(p)}\iota(L),$$
where $\nabla_{\iota^\ast g}\theta$ is the gradient of the function $\theta$ with respect to the induced metric $\iota^\ast g$.

The definition of a special Lagrangian submanifold is given by the following:
\begin{defn}
Let $(M,I,\omega,\Omega)$ be a Calabi-Yau manifold. A \textit{special Lagrangian submanifold} of $(M,I,\omega,\Omega)$ is a Lagrangian submanifold such that its Lagrangian angle is constant $\theta\equiv\theta_0$. $\theta_0$ is called the \textit{phase} of the special Lagrangian submanifold.
\end{defn}

From the formula of the mean curvature vector above, we can see that a special Lagrangian submanifold is a minimal submanifold. More strongly it is known that a special Lagrangian submanifold is homologically volume minimizing. 

\subsection{Group actions and moment maps}\label{Section.B.2}

In this subsection we review the fundamental notions of group actions and moment maps.

Let $H$ be a Lie group which acts on $M$. We denote the translation of $h\in H$ by $L_h:M\to M$. For each $p\in M$, the orbit and the isotropy subgroup at $p$ are denoted by $H\cdot p$ and $H_p$ respectively.

Letting $\mathfrak{h}$ denote the Lie algebra of $H$,  any $\xi\in\mathfrak{h}$ induces a fundamental vector field $\xi^\#$ on $M$, defined as follows:
$$\xi^\#_p=\left.\frac{d}{dt}\right|_{t=0}\exp(t\xi)p\quad (p\in M)$$
where $\exp(t\xi)$ denotes the $1$-parameter subgroup of $H$ associated to $\xi$.

$H$ acts on $\mathfrak{h}^\ast$ by the \textit{coadjoint action}:
$$\mathrm{Ad}^\ast_h:\mathfrak{h}^\ast\to\mathfrak{h}^\ast,$$
where $h\in H$, and for $c\in\mathfrak{h}^\ast$, $\mathrm{Ad}^\ast_hc$ is defined as follows:
$$\langle\mathrm{Ad}^\ast_hc,\xi\rangle=\langle c,\mathrm{Ad}_h\xi\rangle\quad (\xi\in\mathfrak{h}).$$
Here $\langle\cdot,\cdot \rangle$ is the pairing of $\mathfrak{h}$ and $\mathfrak{h}^\ast$. We call 
$$Z(\mathfrak{h}^\ast)=\{c\in\mathfrak{h}^\ast\mid\mathrm{Ad}_h^\ast c=c, h\in H\}$$
the \textit{center} of $\mathfrak{h}^\ast$. If $H$ is abelian, then $Z(\mathfrak{h}^\ast)=\mathfrak{h}^\ast$ holds.

\begin{defn}
Let $H$ be a Lie group acting on a symplectic manifold $(M,\omega)$. A \textit{moment map} $\mu :M\to\mathfrak{h}^\ast$ is an $H$-equivariant map that satisfies for any $\xi\in\mathfrak{h}$ the following: 
$$-\mathfrak{i}(\xi^\#)\omega=d\langle\mu(\cdot),\xi\rangle,$$
where $\mathfrak{i}$ is the interior product.
\end{defn}

If $(M,\omega,H)$ has a moment map, the $H$-action is called \textit{Hamiltonian}. A Hamiltonian action preserves $\omega$.
For each $p\in\mu^{-1}(c)$, $c\in\mathfrak{h}^\ast$, the orbit $H\cdot p$ is isotropic if and only if $c\in Z(\mathfrak{h}^\ast)$.

\section{Transformations of holomorphic volume forms}\label{Section.C}

In this section, we retain the notation as in Section \ref{Section.B}. We show a formula (Proposition \ref{Claim.C}) corresponding to transformations of holomorphic volume forms $L_h^\ast\Omega$. We use this formula to calculate the Lagrangian angle of a Lagrangian immersion which we finally construct in Theorem \ref{Claim.J}.

Let $(M,I)$ be a complex manifold and $\Omega$ a holomorphic volume form on $M$. Let $H$ be a Lie group which acts on $M$ preserving $I$. Then the map
$$(L_h)^*:A^k(M)^\mathbb{C}\to A^k(M)^\mathbb{C};\quad \omega\mapsto L_h^*\omega$$
preserves types of complex differential $k$-forms ($k\in\mathbb{N}$), where $A^k(M)^\mathbb{C}$ is the complex vector space which consists of all complex $k$-forms on $M$.
Hence $L_h^\ast\Omega$ is an $(n,0)$-form. Therefore there exists a holomorphic function $f_h$ that satisfies $L_h^\ast\Omega = f_h\Omega$.

Next we introduce a Calabi-Yau structure into a complex manifold, and assume that an $H$-action preserves the K\"{a}hler structure. Then we can see that the holomorphic function $f_h$ satisfies $|f_h|\equiv1$ as follows.

\begin{prop}\label{Claim.B}
Let $(M, I, \omega, \Omega)$ be a $2n$-dimensional Calabi-Yau manifold and $H$ a Lie group which acts on $M$ preserving $I$ and $\omega$.
Then $f_h$ satisfies $|f_h|\equiv1$ on $M$.
\end{prop}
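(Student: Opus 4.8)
The plan is to apply the pullback $L_h^\ast$ to the defining relation of the Calabi-Yau structure and exploit that $L_h$ preserves $\omega$. First I would record the two elementary facts that are needed. Since the $H$-action preserves $\omega$ we have $L_h^\ast\omega=\omega$, and hence $L_h^\ast\!\left(\frac{\omega^n}{n!}\right)=\frac{\omega^n}{n!}$. Secondly, since $L_h$ is a (real) diffeomorphism of $M$, pullback commutes with complex conjugation of forms; combined with $L_h^\ast\Omega=f_h\Omega$ this yields $L_h^\ast\overline{\Omega}=\overline{L_h^\ast\Omega}=\overline{f_h}\,\overline{\Omega}$.

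Next I would pull back the Calabi-Yau identity $\dfrac{\omega^n}{n!}=(-1)^{\frac{n(n-1)}{2}}\left(\dfrac{\sqrt{-1}}{2}\right)^n\Omega\wedge\overline{\Omega}$ by $L_h$. The left-hand side is unchanged by the first fact, while, since $f_h$ is a function (a $0$-form), the right-hand side becomes
\[
(-1)^{\frac{n(n-1)}{2}}\left(\frac{\sqrt{-1}}{2}\right)^n (L_h^\ast\Omega)\wedge(L_h^\ast\overline{\Omega})
=(-1)^{\frac{n(n-1)}{2}}\left(\frac{\sqrt{-1}}{2}\right)^n f_h\overline{f_h}\,\Omega\wedge\overline{\Omega}
=|f_h|^2\,\frac{\omega^n}{n!},
\]
where in the last equality I use the Calabi-Yau relation once more. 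Comparing the two sides gives $|f_h|^2\,\omega^n=\omega^n$ at every point of $M$.

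Finally, because $\omega$ is symplectic, in particular non-degenerate, the top-degree form $\omega^n$ is nowhere vanishing, so it may be cancelled pointwise, yielding $|f_h|^2\equiv1$ and hence $|f_h|\equiv1$ on $M$. There is essentially no serious obstacle here; the only points deserving a word of care are the commutation of pullback with complex conjugation (valid since $L_h$ is a smooth map of the underlying real manifold and conjugation is just a real-linear operation on complexified forms) and the non-vanishing of $\omega^n$, which is precisely what legitimizes the cancellation. Note that holomorphicity of $f_h$ is not used in this argument; it only serves to guarantee that $f_h$ is well defined globally through $L_h^\ast\Omega=f_h\Omega$, and is recorded here for later use.
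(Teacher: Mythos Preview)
Your proof is correct and follows essentially the same route as the paper: pull back the Calabi-Yau identity $\omega^n/n!=(-1)^{n(n-1)/2}(\sqrt{-1}/2)^n\,\Omega\wedge\overline{\Omega}$ by $L_h$, use $L_h^\ast\omega=\omega$ and $L_h^\ast\Omega=f_h\Omega$, and compare. Your presentation is slightly more explicit about the justification of the final cancellation via the nowhere-vanishing of $\omega^n$, but the argument is the same.
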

\begin{proof}
The quadruple $(M, {(L_{h*})^{-1}\circ I\circ (L_{h*})}, L_h^*\omega, L_h^*\Omega)$ is also a Calabi-Yau manifold for any $h\in H$, since $H$ preserves $I$ and $\omega$. Therefore, we have
$$
\begin{aligned}
&(-1)^{\frac{n(n-1)}{2}}\left(\frac{\sqrt{-1}}{2}\right)^n \Omega\wedge\overline{\Omega}
=\frac{\omega^n}{n!}
=\frac{(L_h^\ast\omega)^n}{n!}\\
=&(-1)^{\frac{n(n-1)}{2}}\left(\frac{\sqrt{-1}}{2}\right)^n L_h^*\Omega\wedge\overline{L_h^*\Omega}
=(-1)^{\frac{n(n-1)}{2}}\left(\frac{\sqrt{-1}}{2}\right)^n |f_h|^2\Omega\wedge\overline{\Omega}.
\end{aligned}
$$

Comparing the both sides, we obtain $|f_h|\equiv1$.
\end{proof}

By Proposition \ref{Claim.B} we know the following: Because a holomorphic function which has a constant norm on a connected space has to be constant, $f_h\equiv\mathrm{const.}\in U(1)$ on a connected Calabi-Yau manifold. Therefore we can define a map $c:H\to U(1); h\mapsto c(h)=c_h:=f_h$.

The map $c$ is a homomorphism between Lie groups. In fact for $h_1, h_2\in H$, we have
$$
c_{h_2}c_{h_1}\Omega
=L_{h_2}^*(L_{h_1}^*\Omega)
=L_{h_1h_2}^*\Omega
=c_{h_1h_2}\Omega.
$$
Therefore $c_{h_2h_1}=c_{h_1}c_{h_2}=c_{h_2}c_{h_1}$, and $c$ is a homomorphism.

Using this fact, next Proposition \ref{Claim.C} expresses transformations of a holomorphic volume form in a connected Calabi-Yau manifold in terms of a Lie algebra. We assume $H$ to be connected so that we express any $h \in H$ as $h=\exp\eta_1\cdots\exp\eta_l$ by $\eta_1,\cdots,\eta_l\in\mathfrak{h}$. For $h$, such $\eta_1,\cdots,\eta_l\in\mathfrak{h}$ are not unique, however the following holds for any of them.

\begin{prop}\label{Claim.C}
Let $(M, I, \omega, \Omega)$ be a connected Calabi-Yau manifold and $H$ a connected Lie group  which acts on $M$ preserving $I$ and $\omega$.
Then there exists $a_H\in\mathfrak{h}^*$ such that for any $h\in H$, it holds that
$$L_h^*\Omega=e^{\sqrt{-1}\langle a_H, \eta_1+\cdots +\eta_l\rangle}\Omega,$$
where
$$\eta_1,\cdots,\eta_l\in\mathfrak{h}\ \mathrm{such\ that}\ h=\exp \eta_1 \cdots \exp \eta_l.$$
\end{prop}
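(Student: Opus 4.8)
The plan is to differentiate the Lie group homomorphism $c\colon H\to U(1)$, $h\mapsto c_h$ (characterized by $L_h^\ast\Omega=c_h\Omega$) already obtained above, and then to recover $c$ from its differential via the naturality of the exponential map.

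First I would observe that $c$ is smooth: since the $H$-action on $M$ is smooth, the map $h\mapsto L_h^\ast\Omega_p$ is smooth for each $p\in M$, and dividing by the nonzero top-degree form $\Omega_p$ shows $h\mapsto c_h$ is smooth. Thus $c\colon H\to U(1)$ is a homomorphism of Lie groups (the homomorphism property was verified in the paragraph preceding the statement). Its differential at the identity is a linear map $dc_e\colon\mathfrak{h}\to\mathfrak{u}(1)$; identifying $\mathfrak{u}(1)$ with $\sqrt{-1}\,\mathbb{R}$, I would define $a_H\in\mathfrak{h}^\ast$ by $dc_e(\xi)=\sqrt{-1}\,\langle a_H,\xi\rangle$ for all $\xi\in\mathfrak{h}$. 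This is legitimate because $dc_e$ is real-linear with values in $\sqrt{-1}\,\mathbb{R}$.

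Next, by naturality of the exponential map applied to $c$, for every $\xi\in\mathfrak{h}$ we get $c(\exp\xi)=\exp_{U(1)}\!\big(dc_e(\xi)\big)=e^{\sqrt{-1}\langle a_H,\xi\rangle}$. Given $h\in H$, connectedness of $H$ lets us write $h=\exp\eta_1\cdots\exp\eta_l$ with $\eta_1,\dots,\eta_l\in\mathfrak{h}$, and multiplicativity of $c$ then yields
$$
c_h=c(\exp\eta_1)\cdots c(\exp\eta_l)=e^{\sqrt{-1}\langle a_H,\eta_1\rangle}\cdots e^{\sqrt{-1}\langle a_H,\eta_l\rangle}=e^{\sqrt{-1}\langle a_H,\eta_1+\cdots+\eta_l\rangle}.
$$
Since $L_h^\ast\Omega=c_h\Omega$, this is precisely the asserted identity. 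No check of independence of the decomposition is required: the left-hand side $L_h^\ast\Omega$ depends only on $h$, and the computation exhibits the right-hand side as $c_h\Omega$, which likewise depends only on $h$.

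I do not expect a genuine obstacle here. The only points deserving attention are the (routine) smoothness of $c$, so that $dc_e$ is defined, and the bookkeeping identification $\mathfrak{u}(1)\cong\sqrt{-1}\,\mathbb{R}$ that converts $dc_e$ into the desired element $a_H$ of $\mathfrak{h}^\ast$; the remainder is a formal consequence of the functoriality of $\exp$ and of $c$ being a homomorphism into the abelian group $U(1)$.
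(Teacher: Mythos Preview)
Your proof is correct and follows essentially the same approach as the paper: define $a_H$ via the differential $(dc)_e$ of the homomorphism $c\colon H\to U(1)$, use the relation $c(\exp\xi)=e^{(dc)_e\xi}$, and conclude by multiplicativity after writing $h=\exp\eta_1\cdots\exp\eta_l$. The only cosmetic difference is that you invoke the naturality of $\exp$ for Lie group homomorphisms directly, whereas the paper rederives this by observing that $t\mapsto c(\exp(t\xi))$ is a one-parameter subgroup of $U(1)$ and differentiating.
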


\begin{proof}
Because $c:H\to U(1)$ defined above is a homomorphism, the following commutative relation holds between  $c$ and $(dc)_e:\mathfrak{h}\cong\mathrm{T}_eH\to\mathfrak{u}(1)$:
$$c\circ \exp \xi = e^{(dc)_e\xi}.$$
In fact, since $c$ makes a one-parameter subgroup $\exp(t\xi)$ of $H$ into a one-parameter subgroup $c(\exp(t\xi))$ of $U(1)$, there exists $\sqrt{-1}\alpha\in\mathfrak{u}(1)$ $(\alpha\in\mathbb{R})$ such that $c(\exp(t\xi))=\exp_{U(1)}(t(\sqrt{-1}\alpha))=e^{\sqrt{-1}t\alpha}$. By differentiating both sides, we obtain
$$
\sqrt{-1}\alpha=\left.\frac{d}{dt}\right|_{t=0}e^{\sqrt{-1}t\alpha}=
\left.\frac{d}{dt}\right|_{t=0}c(\exp(t\xi))=(dc)_e\left.\frac{d}{dt}\right|_{t=0}\exp(t\xi)=(dc)_e\xi.
$$
Thus we see $\sqrt{-1}\alpha=(dc)_e\xi$ and $(c\circ\exp)(t\xi)=e^{t(dc)_e\xi}$. When $t=1$, we obtain $c\circ\exp(\xi)=e^{(dc)_e\xi}$. 

Because $H$ is connected, for each $h\in H$, there exist finite $\eta_1,\cdots,\eta_l\in\mathfrak{h}$ such that $h=\exp \eta_1 \cdots \exp \eta_l$. Then, we have
$$
\begin{aligned}
&c_h
=c(\exp \eta_1\cdots \exp \eta_l)
=c(\exp \eta_1)\cdots c(\exp \eta_l)
=e^{(dc)_e\eta_1}\cdots e^{(dc)_e\eta_l}\\
=&e^{\sqrt{-1}\langle-\sqrt{-1}(dc)_e, \eta_1\rangle}\cdots e^{\sqrt{-1}\langle-\sqrt{-1}(dc)_e, \eta_l\rangle}
=e^{\sqrt{-1}\langle-\sqrt{-1}(dc)_e, \eta_1+\cdots +\eta_l\rangle}.
\end{aligned}
$$
Therefore noting $\mathfrak{u}(1)=\{\sqrt{-1}\varphi\in\mathbb{C}\mid\varphi\in\mathbb{R}\}$ and letting $a_H:=-\sqrt{-1}(dc)_e$, we can define a linear map $a_H:\mathfrak{h}\to\mathbb{R}$, i.e., $a_H\in\mathfrak{h}^\ast$ and the claim of the proposition holds.
\end{proof}

By Proposition \ref{Claim.C}, transformations of a holomorphic volume form are expressed in terms of a Lie algebra. This enables us to explicitly show the Lagrangian angle of a Lagrangian immersion $(H/K)\times V\to M$ which we construct in the next section in terms of the Lie algebra $\mathfrak{h}$ at each $(hK,p)\in (H/K)\times V$. Here $K$ is a closed Lie subgroup of $H$ and $V$ is a submanifold in $M$.

\begin{cor}\label{Claim.E}
Let $(M, I, \omega, \Omega, H)$ be same as Proposition \ref{Claim.C}. Then $a_H=0$ if and only if the $H$-action preserves $\Omega$, namely it preserves the Calabi-Yau structure.
\end{cor}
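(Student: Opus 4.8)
The plan is to read off both implications directly from the formula in Proposition \ref{Claim.C}, namely $L_h^*\Omega = e^{\sqrt{-1}\langle a_H, \eta_1 + \cdots + \eta_l\rangle}\Omega$ for $h = \exp\eta_1 \cdots \exp\eta_l$, together with the identification $a_H = -\sqrt{-1}(dc)_e$ and the relation $L_h^*\Omega = c_h\Omega$ established in Section \ref{Section.C}.

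For the ``only if'' direction I would argue that if $a_H = 0$, then for every $h \in H$, writing $h = \exp\eta_1 \cdots \exp\eta_l$, the exponent $\langle a_H, \eta_1 + \cdots + \eta_l\rangle$ vanishes, so $L_h^*\Omega = \Omega$; that is, the $H$-action preserves $\Omega$, hence the whole Calabi-Yau structure since it already preserves $I$ and $\omega$ by hypothesis. This step is immediate and uses nothing beyond Proposition \ref{Claim.C}. For the ``if'' direction, suppose the $H$-action preserves $\Omega$, so $L_h^*\Omega = \Omega$ for all $h$, equivalently $c_h \equiv 1$, i.e.\ $c : H \to U(1)$ is the trivial homomorphism. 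Then $(dc)_e = 0$, and therefore $a_H = -\sqrt{-1}(dc)_e = 0$.

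The only point requiring a moment's care is the last step of the ``if'' direction: from $L_h^*\Omega = \Omega$ one only gets $e^{\sqrt{-1}\langle a_H, \xi\rangle} = 1$, i.e.\ $\langle a_H, \xi\rangle \in 2\pi\mathbb{Z}$, and to upgrade this to $\langle a_H, \xi\rangle = 0$ one must use that $t \mapsto \langle a_H, t\xi\rangle = t\langle a_H,\xi\rangle$ is linear in $t$, so its image is either all of $\mathbb{R}$ or $\{0\}$, and the former cannot lie in the discrete set $2\pi\mathbb{Z}$; equivalently, one simply passes to the infinitesimal statement via $(dc)_e = 0$, which is why I phrased the argument that way above. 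Apart from this, the corollary is a direct substitution into Proposition \ref{Claim.C}, so I expect no real obstacle.
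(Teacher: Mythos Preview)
Your proof is correct. The paper itself gives no proof of this corollary, treating it as an immediate consequence of Proposition~\ref{Claim.C} and the definition $a_H=-\sqrt{-1}(dc)_e$; your argument simply fills in those details, and the careful linearity-in-$t$ remark (or equivalently passing to $(dc)_e=0$) is exactly the right way to handle the ``if'' direction.
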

\section{Special Lagrangian construction}\label{Section.D}
In this section we show a construction of special Lagrangian submanifolds by ``(generalized) perpendicular symmetries'', using the formula (Proposition \ref{Claim.C}) which we proved in the previous section. We construct an isotropic immersion, especially a Lagrangian immersion in Proposition \ref{Claim.H}. We give a formula that express the Lagrangian angle of this Lagrangian immersion in Theorem \ref{Claim.J}. We finally construct a special Lagrangian immersion in Corollary \ref{Claim.M} by considering a condition to have constant Lagrangian angle.
\subsection{Immersions}\label{Section.D.1}

First with the use of group actions, we construct an immersion which is fundamental for our constructions. This immersion has a form $H\cdot V$ for a submanifold $V$ in $M$. When $H$ is abelian, it might be natural to assume that the action is free. Otherwise, we may need to consider singular orbits. To control them, we add a condition that the isotropy subgroup $H_p$ at each point $p\in V$ is a constant $K$. Lemma \ref{Claim.G} is one of important properties that these immersions have. 

\begin{prop}\label{Claim.F}
Let $M$ be a manifold and $H$ a Lie group which acts on $M$. Let $\mathfrak{h}$ be the Lie algebra of $H$ and $V$ a submanifold of $M$.
Assume the followings:
\begin{enumerate}\setlength{\labelsep}{1em}\setlength{\itemindent}{5em}
\item[\em{(Imm-$H$)}]$\xi^\#_p\notin \mathrm{T}_pV\backslash\{0\}$ for any $p\in V$ and any $\xi\in \mathfrak{h}$, and
\item[\em{(Imm-istp)}]the isotropy subgroup at $p$ is a constant $K$ for any $p\in V$.
\end{enumerate}
Define a map $\phi : (H/K)\times V \to M$ by $(hK,p) \mapsto hp$. Then $\phi$ is an immersion.
\end{prop}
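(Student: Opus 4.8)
The plan is to check that $\phi$ is well defined and smooth, and then show that its differential is injective at every point. Well-definedness is immediate from condition (Imm-istp): if $hK=h'K$ then $h'=hk$ with $k\in K=H_p$, so $h'p=hkp=hp$. Smoothness is standard, since $\phi$ is induced from the smooth action map $H\times V\to M$ by descent along the principal $K$-bundle $H\to H/K$. So the whole content is the statement $d\phi_{(hK,p)}$ is injective.

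First I would reduce to the identity coset by equivariance. Writing $L_{h'}$ for left translation (on $M$ or on $H/K$ as appropriate), one has $\phi\circ(L_{h'}\times\mathrm{id}_V)=L_{h'}\circ\phi$, hence $d\phi_{(h'K,p)}$ is injective if and only if $d\phi_{(eK,p)}$ is, because $L_{h'}$ is a diffeomorphism of $M$ and of $H/K$. Thus it suffices to prove injectivity of $d\phi_{(eK,p)}$ for every $p\in V$.

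At $(eK,p)$ I would identify $T_{eK}(H/K)\cong\mathfrak{h}/\mathfrak{k}$, where $\mathfrak{k}=\mathrm{Lie}(K)$, and regard $T_pV$ as a subspace of $T_pM$. A tangent vector to $(H/K)\times V$ at $(eK,p)$ has the form $(\bar\xi,w)$ with $\xi\in\mathfrak{h}$ and $w\in T_pV$; representing it by $t\mapsto(\exp(t\xi)K,\gamma(t))$ with $\gamma(0)=p$, $\gamma'(0)=w$, the Leibniz rule for the action yields $d\phi_{(eK,p)}(\bar\xi,w)=\xi^\#_p+w$. If this vanishes, then $\xi^\#_p=-w\in T_pV$, so by (Imm-$H$) we must have $\xi^\#_p=0$, whence $w=0$; and $\xi^\#_p=0$ forces $\exp(t\xi)$ to fix $p$ for all $t$ (the curve $t\mapsto\exp(t\xi)p$ is the integral curve of $\xi^\#$ through the fixed point $p$), i.e. $\xi\in\mathrm{Lie}(H_p)=\mathfrak{k}$, so $\bar\xi=0$ in $\mathfrak{h}/\mathfrak{k}$. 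Hence $d\phi_{(eK,p)}$ is injective.

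I expect the only delicate point to be the bookkeeping of identifications: that $T_{eK}(H/K)\cong\mathfrak{h}/\mathfrak{k}$ and that the linear map $\mathfrak{h}/\mathfrak{k}\to T_pM$, $\bar\xi\mapsto\xi^\#_p$, is well defined, which is exactly where (Imm-istp) enters (it gives $H_p=K$, hence $\mathrm{Lie}(H_p)=\mathfrak{k}$), together with the elementary equivalence $\xi^\#_p=0\iff\xi\in\mathrm{Lie}(H_p)$. Everything else is the routine computation of the differential of a group action, and condition (Imm-$H$) is precisely what rules out the ``mixed'' tangencies $\xi^\#_p\in T_pV\setminus\{0\}$ that would otherwise break injectivity.
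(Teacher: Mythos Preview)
Your proof is correct and follows essentially the same route as the paper's: compute the differential of $\phi$ as $(\bar\xi,w)\mapsto\xi^\#_p+w$ (the paper does this as Lemma~\ref{Claim.G} at a general point $(hK,p)$, pushing forward by $(L_h)_{*p}$, whereas you first reduce to $h=e$ by equivariance), then use (Imm-$H$) to force $\xi^\#_p=0$ and $w=0$, and finally argue that $\xi^\#_p=0$ kills the $H/K$-direction. The only cosmetic difference is in this last step: the paper invokes the bijection $j:H/K\to H\cdot p$ and the injectivity of $j_{*}$, while you argue directly via the integral curve of $\xi^\#$ that $\xi\in\mathrm{Lie}(H_p)=\mathfrak{k}$; these are equivalent, and your version has the mild advantage of not needing to say anything about $H\cdot p$ as a submanifold.
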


\begin{lem}\label{Claim.G}
Assume the conditions of Proposition \ref{Claim.F}. For any $(hK,p)\in (H/K)\times V$, any $\xi\in\mathfrak{h}$, and any $v\in\mathrm{T}_pV$, the following holds:
$${\phi_*}_{(hK,p)}\left(\left.\frac{d}{dt}\right|_{t=0}h \exp (t\xi) K,v\right) = (L_h)_{*p}(\xi^\#_p+v).$$
\end{lem}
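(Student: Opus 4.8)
The plan is to verify the displayed identity by evaluating $\phi$ along a single explicitly chosen curve through $(hK,p)$ whose velocity is the prescribed tangent vector, and then differentiating by the chain rule. First I would fix $\xi\in\mathfrak h$ and $v\in\mathrm T_pV$, choose a smooth curve $p:(-\varepsilon,\varepsilon)\to V$ with $p(0)=p$ and $\dot p(0)=v$, and set
$$\gamma(t)=\bigl(h\exp(t\xi)K,\,p(t)\bigr)\in(H/K)\times V.$$
Its first component is a curve in $H/K$ with velocity $\left.\frac{d}{dt}\right|_{t=0}h\exp(t\xi)K$ at $t=0$, and its second component has velocity $v$; hence $\dot\gamma(0)=\left(\left.\frac{d}{dt}\right|_{t=0}h\exp(t\xi)K,\,v\right)$ is precisely the vector on the left-hand side. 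Here one uses that $\phi$ is well defined: since the isotropy group $K=H_p$ is the same for all $p\in V$ by (Imm-istp), we have $hkp=hp$ for every $k\in K$.

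Next I would compute $(\phi\circ\gamma)(t)=h\exp(t\xi)\,p(t)=L_{h\exp(t\xi)}\bigl(p(t)\bigr)$ and differentiate at $t=0$. Writing $F(s,t):=L_{h\exp(s\xi)}(p(t))$, the chain rule gives $(\phi\circ\gamma)'(0)=\partial_sF(0,0)+\partial_tF(0,0)$. The $t$-derivative is $\partial_tF(0,0)=\left.\frac{d}{dt}\right|_{t=0}L_h(p(t))=(L_h)_{*p}(v)$, and the $s$-derivative is $\partial_sF(0,0)=\left.\frac{d}{ds}\right|_{s=0}L_h\bigl(\exp(s\xi)p\bigr)=(L_h)_{*p}\bigl(\left.\frac{d}{ds}\right|_{s=0}\exp(s\xi)p\bigr)=(L_h)_{*p}(\xi^\#_p)$, the last equality being the definition of the fundamental vector field. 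Adding and using linearity of $(L_h)_{*p}$ gives $(\phi\circ\gamma)'(0)=(L_h)_{*p}(\xi^\#_p+v)$, which equals the right-hand side; since $(\phi\circ\gamma)'(0)={\phi_*}_{(hK,p)}(\dot\gamma(0))$, we are done.

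I do not expect a genuine obstacle: the computation is a two-line application of the chain rule, and the only points needing care are the well-definedness of $\phi$ (handled by (Imm-istp)) and the observation that, as $\xi$ ranges over $\mathfrak h$, the pair $\bigl(\left.\frac{d}{dt}\right|_{t=0}h\exp(t\xi)K,\,v\bigr)$ exhausts $\mathrm T_{(hK,p)}\bigl((H/K)\times V\bigr)$ — because left translation identifies $\mathfrak h$ with $\mathrm T_hH$ and $H\to H/K$ is a submersion — so that the formula in fact determines ${\phi_*}_{(hK,p)}$ completely. This last remark is needed only for the later applications of the lemma, not for the displayed identity itself.
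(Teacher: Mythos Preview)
Your proof is correct and follows essentially the same approach as the paper's. The only cosmetic difference is that the paper splits the tangent vector into its two components from the start and computes $\phi_*$ on $(\left.\frac{d}{dt}\right|_{t=0}h\exp(t\xi)K,0)$ and $(0,v)$ separately using linearity of the differential, whereas you push a single diagonal curve through $\phi$ and then recover the same two terms via the chain rule on $F(s,t)$; both the well-definedness remark and the observation that these vectors exhaust $\mathrm{T}_{hK}(H/K)$ appear in the paper's argument as well.
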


\begin{proof}[Proof of Lemma \ref{Claim.G}]
By (Imm-istp), the map $\phi$ is well-defined.

Fix an arbitrary point $(hK,p) \in (H/K)\times V$. First we show the following:
$$\mathrm{T}_{hK}(H/K)=\left\{\left.\frac{d}{dt}\right|_{t=0}h \exp (t\xi) K \,\Bigg|\, \xi\in \mathfrak{h}\right\}.$$
For $g\in H$, define $\tau_g$ by
$$\tau_g:(H/K) \to (H/K);\quad hK \mapsto ghK.$$
The map $\tau_g$ is an element of Diff$(H/K)$, here Diff$(H/K)$ is the space of all diffeomorphisms on $H/K$.
We have $\mathrm{T}_{K}(H/K)=\{\left.\frac{d}{dt}\right|_{t=0}\exp (t\xi) K\ |\  \xi\in\mathfrak{h}\}$. We also have
$$(\tau_h)_{*K}\left.\frac{d}{dt}\right|_{t=0}\exp (t\xi) K = \left.\frac{d}{dt}\right|_{t=0}h \exp (t\xi) K.$$
The linear map $(\tau_h)_{*K}:\mathrm{T}_K(H/K) \to T_{hK}(H/K)$ is an isomorphism. Therefore the claim above holds.

Let $\gamma(t)$ be a curve in $V$ that satisfies $\gamma(0)=p$ and $\gamma^\prime(0)=v$. Then we have
$$(\phi_*)_{(hK,p)}\left(\left.\frac{d}{dt}\right|_{t=0}h\exp (t\xi) K,0\right)=\left.\frac{d}{dt}\right|_{t=0}h\exp (t\xi) p = (L_h)_{*p}\xi^\#_p,$$
$$(\phi_*)_{(hK,p)}(0,v) = \left.\frac{d}{dt}\right|_{t=0}h\gamma(t) = (L_h)_{*p}v.$$
Thus Lemma \ref{Claim.G} has been proved.
\end{proof}
\begin{proof}[Proof of Proposition \ref{Claim.F}]
To prove Proposition \ref{Claim.F}, it is sufficient to show that if for any $\xi\in\mathfrak{h}$ and $v\in\mathrm{T}_pV$, 
$${\phi_*}_{(hK,p)}\left(\left.\frac{d}{dt}\right|_{t=0}h \exp (t\xi) K,v\right) = (L_h)_{*p}(\xi^\#_p+v) = 0,$$
then
$$\left.\frac{d}{dt}\right|_{t=0}h \exp (t\xi) K=0,\quad v=0.$$
Since $(L_h)_{\ast p}$ is an isomorphism, if $(L_h)_{\ast p}(\xi^\#_p +v)=0,$ then $\xi^\#_p+v=0$.
 By (Imm-$H$), a pair $(\xi^\#_p, v)$ is linearly independent. Hence we have $\xi^\#_p=0$ and $v=0$ from $\xi^\#_p+v=0$.
If we define a map $j$ by
$$j: (H/K) \to (H\cdot p);\quad hK \mapsto hp,$$
the map $j$ is a diffeomorphism.
With the isomorphism ${j_*}_{hK}:\mathrm{T}_{hK}(H/K)\to\mathrm{T}_{hp}(H\cdot p)$, we have
$$\left.\frac{d}{dt}\right|_{t=0}h\exp (t\xi) K \mapsto \left.\frac{d}{dt}\right|_{t=0}h\exp (t\xi) p = {L_h}_{*p}\xi^\#_p.$$
Thus $\left.\frac{d}{dt}\right|_{t=0}h\exp (t\xi) K=0$ if and only if $\xi^\#_p=0$.
\end{proof}
\subsection{Isotropic immersions}\label{Section.D.2}
Next we introduce a symplectic structure to a manifold $M$, and show a condition for the immersions of Proposition \ref{Claim.F} to be isotropic in Proposition \ref{Claim.H}.
\begin{prop}\label{Claim.H}
Let $(M,\omega)$ be a $2n$-dimensional symplectic manifold and $H$ a Lie group which acts on $M$ and has a moment map $\mu$. Let $\mathfrak{h}$ be the Lie algebra of $H$ and $c$ an element of $\mathfrak{h}^\ast$. Let $V_c$ be a submanifold of $M$ that satisfies $V_c\subset\mu^{-1}(c)$.

Assume \em{(Imm-$H$)}, \em{}\em{(Imm-istp)}\em{}, and the followings:
\begin{enumerate}\setlength{\labelsep}{1em}\setlength{\itemindent}{5em}
\item[(\em{Istp-$V_c$)}] $V_c$ is isotropic, and
\item[\em{(Istp-cnt)}]$c$ is an element of $Z(\mathfrak{h}^\ast)$, the center of $\mathfrak{h}^\ast$.
\end{enumerate}
Define a map $\phi_c : (H/K)\times V_c \to M$ by $(hK,p) \mapsto hp$. Then $\phi_c$ is an isotropic immersion.

In addition, if the following condition holds, $\phi_c$ is a Lagrangian immersion:
\begin{enumerate}\setlength{\labelsep}{1em}\setlength{\itemindent}{5em}
\item[\em{(Lag-dim)}]$\dim H/K+\dim V_c=n$.
\end{enumerate}
\end{prop}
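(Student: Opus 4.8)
The plan is to compute $\phi_c^\ast\omega$ at an arbitrary point $(hK,p)\in(H/K)\times V_c$ and show it vanishes, using the explicit formula for $(\phi_c)_\ast$ given in Lemma \ref{Claim.G}. Since $\phi_c$ is already known to be an immersion by Proposition \ref{Claim.F} (both \emph{(Imm-$H$)} and \emph{(Imm-istp)} are assumed), it suffices to verify the isotropy condition $\phi_c^\ast\omega\equiv 0$; the Lagrangian assertion then follows immediately from \emph{(Lag-dim)} since $\dim\big((H/K)\times V_c\big)=\dim H/K+\dim V_c=n=\tfrac12\dim M$.

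First I would fix $(hK,p)$ and take two arbitrary tangent vectors to $(H/K)\times V_c$ there; by Lemma \ref{Claim.G} each has the form $\big(\tfrac{d}{dt}|_{t=0}h\exp(t\xi)K,\,v\big)$ with image $(L_h)_{\ast p}(\xi^\#_p+v)$, for $\xi,\xi'\in\mathfrak h$ and $v,v'\in\mathrm T_pV_c$. Since the $H$-action is Hamiltonian it preserves $\omega$, so $(L_h)^\ast\omega=\omega$ and the computation reduces to evaluating $\omega_p(\xi^\#_p+v,\ (\xi')^\#_p+v')$. Expanding bilinearly, this splits into four terms: $\omega_p(\xi^\#_p,(\xi')^\#_p)$, $\omega_p(\xi^\#_p,v')$, $\omega_p(v,(\xi')^\#_p)$, and $\omega_p(v,v')$. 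The last term vanishes by \emph{(Istp-$V_c$)} since $V_c$ is isotropic. For the cross terms, I would use the moment map equation $-\mathfrak i(\xi^\#)\omega=d\langle\mu(\cdot),\xi\rangle$: thus $\omega_p(\xi^\#_p,v')=-\big(d\langle\mu(\cdot),\xi\rangle\big)_p(v')=-v'\big(\langle\mu,\xi\rangle\big)$, and since $v'\in\mathrm T_pV_c$ is tangent to $V_c\subset\mu^{-1}(c)$, the function $\langle\mu,\xi\rangle$ is constant ($=\langle c,\xi\rangle$) along $V_c$, so this derivative is zero; likewise for the other cross term.

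The remaining term $\omega_p(\xi^\#_p,(\xi')^\#_p)$ is the one where the hypothesis \emph{(Istp-cnt)} enters, and I expect this to be the main (though still short) point. Here I would invoke the fact recalled at the end of Section \ref{Section.B.2}: for $p\in\mu^{-1}(c)$ with $c\in Z(\mathfrak h^\ast)$, the orbit $H\cdot p$ is isotropic, i.e.\ $\omega_p$ vanishes on $\mathrm T_p(H\cdot p)=\{\xi^\#_p:\xi\in\mathfrak h\}$; in particular $\omega_p(\xi^\#_p,(\xi')^\#_p)=0$. (Alternatively one verifies this directly: $\omega_p(\xi^\#_p,(\xi')^\#_p)=-(d\langle\mu,\xi'\rangle)_p(\xi^\#_p)=-\xi^\#_p\langle\mu,\xi'\rangle=-\langle d\mu_p(\xi^\#_p),\xi'\rangle$, and by $H$-equivariance of $\mu$ this equals $\langle\mathrm{ad}^\ast_\xi\mu(p),\xi'\rangle=\langle\mathrm{ad}^\ast_\xi c,\xi'\rangle=0$ since $c\in Z(\mathfrak h^\ast)$.) Collecting the four terms, $\phi_c^\ast\omega$ vanishes at $(hK,p)$; as this point was arbitrary, $\phi_c$ is an isotropic immersion, and under \emph{(Lag-dim)} a Lagrangian immersion. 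The only care needed is the well-definedness of the tangent-vector representatives and the identification of $\mathrm T_{hK}(H/K)$, but both are already supplied by Lemma \ref{Claim.G}.
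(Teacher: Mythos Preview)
Your proposal is correct and follows essentially the same route as the paper: reduce via Lemma \ref{Claim.G} and $L_h^\ast\omega=\omega$ to $\omega_p(\xi^\#_p+v,(\xi')^\#_p+v')$, then kill the four terms using (Istp-$V_c$), the moment-map identity (which the paper isolates as Lemma \ref{Claim.I}), and (Istp-cnt) via the isotropy of orbits over $Z(\mathfrak h^\ast)$. The only cosmetic difference is that you inline the cross-term vanishing whereas the paper states it as a separate lemma.
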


\begin{lem}\label{Claim.I}
Assume the settings of Proposition \ref{Claim.H} except the conditions \em{(Imm-$H$)}\em{}, \em{(Imm-istp)}\em{}, \em{(Istp-$V_c$)}\em{}, \em{(Istp-cnt)}\em{}, and \em{(Lag-dim)}\em{}. Then $\omega_p(\xi^\#_p, v)=0$ for any $p\in V_c$, any $v\in\mathrm{T}_pV_c$, and any $\xi\in\mathfrak{h}$.
\end{lem}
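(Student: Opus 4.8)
The plan is to use nothing beyond the defining equation of the moment map together with the hypothesis $V_c\subset\mu^{-1}(c)$; no other structure (in particular none of the conditions (Imm-$H$), (Imm-istp), (Istp-$V_c$), (Istp-cnt), (Lag-dim)) will be needed, which is exactly what the statement asserts. First I would fix $p\in V_c$, $\xi\in\mathfrak{h}$, and $v\in\mathrm{T}_pV_c$, and recall that the moment map condition $-\mathfrak{i}(\xi^\#)\omega=d\langle\mu(\cdot),\xi\rangle$ gives, upon evaluating both sides on the tangent vector $v$ at the point $p$,
$$-\omega_p(\xi^\#_p,v)=\bigl(d\langle\mu(\cdot),\xi\rangle\bigr)_p(v).$$

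Next I would observe that the smooth function $M\to\mathbb{R}$, $q\mapsto\langle\mu(q),\xi\rangle$, is constant on $V_c$: since $\mu(q)=c$ for every $q\in V_c$, this function equals the constant $\langle c,\xi\rangle$ there. Representing $v$ by a smooth curve $\gamma$ in $V_c$ with $\gamma(0)=p$ and $\gamma'(0)=v$, the right-hand side above becomes $\frac{d}{dt}\big|_{t=0}\langle\mu(\gamma(t)),\xi\rangle=\frac{d}{dt}\big|_{t=0}\langle c,\xi\rangle=0$. Hence $\omega_p(\xi^\#_p,v)=0$, which is the assertion of the lemma.

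There is essentially no obstacle here; the one point deserving a word of care is that $v\in\mathrm{T}_pV_c$ is genuinely tangent to the submanifold $V_c$ (not merely to the level set $\mu^{-1}(c)$, which need not be a manifold), so that a curve representing $v$ can be taken to lie in $V_c$, and it is precisely along such a curve that $\mu\equiv c$ is available. This lemma is the place where the inclusion $V_c\subset\mu^{-1}(c)$ is invoked, and Proposition \ref{Claim.H} will then combine it with Lemma \ref{Claim.G} and the listed extra hypotheses to upgrade the conclusion to the isotropy (and, under (Lag-dim), the Lagrangian property) of $\phi_c$.
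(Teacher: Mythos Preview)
Your proof is correct and is essentially the same argument as the paper's: both use the moment map identity $-\mathfrak{i}(\xi^\#)\omega=d\langle\mu(\cdot),\xi\rangle$ and the fact that $\mu$ is constant (equal to $c$) on $V_c$, so that $(d\mu)_pv=0$ for $v\in\mathrm{T}_pV_c$. The only difference is cosmetic---the paper writes $-\langle(d\mu)_pv,\xi\rangle=0$ directly, while you spell out the curve argument---but the content is identical.
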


\begin{proof}[Proof of Lemma \ref{Claim.I}]
Noting $(d\mu)_pv=0$, we have
$$
\begin{aligned}
\omega_p(\xi^\#_p, v)
=-d(\langle\mu(\cdot), \xi\rangle)_p v
=-\langle(d\mu)_pv,\xi\rangle
=0.
\end{aligned}
$$
Thus we have shown Lemma \ref{Claim.I}.
\end{proof}
\begin{proof}[Proof of Proposition \ref{Claim.H}]
Since the map $\phi_c$ is an immersion by Proposition \ref{Claim.F}, it is sufficient for proving Proposition \ref{Claim.H} to show that ${\phi_c^*}\omega \equiv 0$ on $(H/K)\times V_c$.
For two arbitrary elements $(\left.\frac{d}{dt}\right|_{t=0}h\exp (t\xi_i) K,v_i)\in \mathrm{T}_{hK}(H/K)\times\mathrm{T}_pV_c (i=1, 2)$, we have
$$
\begin{aligned}
&(\phi_c^*\omega)_{(hK,p)}\left((\left.\frac{d}{dt}\right|_{t=0}h\exp (t\xi_1) K,v_1), (\left.\frac{d}{dt}\right|_{t=0}h\exp (t\xi_2) K,v_2)\right)\\
=&\omega_{hp}\left((\phi_c)_{*(hK,p)}(\left.\frac{d}{dt}\right|_{t=0}h\exp (t\xi_1) K,v_1), (\phi_c)_{*(hK,p)}(\left.\frac{d}{dt}\right|_{t=0}h\exp (t\xi_2) K,v_2)\right)\\
=&\omega_{hp}\bigg((L_h)_{*p}\{(\xi_1)^\#_p+v_1\}, (L_h)_{*p}\{(\xi_2)^\#_p+v_2\}\bigg)\\
=&\omega_p((\xi_1)^\#_p+v_1, (\xi_2)^\#_p+v_2)\\
=&\omega_p((\xi_1)^\#_p, (\xi_2)^\#_p)+\omega_p((\xi_1)^\#_p, v_2)+\omega_p(v_1, (\xi_2)^\#_p)+\omega_p(v_1, v_2).
\end{aligned}
$$
The first term is equal to zero by (Istp-cnt), the second and third terms are zero by Lemma \ref{Claim.I}, and the forth term is zero by (Istp-$V_c$). Thus we see that $\phi$ is an isotropic immersion. In addition, if (Lag-dim) holds,  this immersion is Lagrangian by the definition of Lagrangian submanifolds.
\end{proof}

\subsection{Lagrangian angle and special Lagrangian construction}\label{Section.D.3}

We constructed a Lagrangian immersion in Proposition \ref{Claim.H}. We show a condition for this immersion to be a special Lagrangian immersion by using the Lagrangian angle. In Theorem \ref{Claim.J}, with the use of a formula for transformations of holomorphic volume forms (Proposition \ref{Claim.C}), we give explicitly the Lagrangian angle of a Lagrangian immersion of Proposition \ref{Claim.H}.

Lemma \ref{Claim.K} is used for calculations of the Lagrangian angle.

\begin{thm}\label{Claim.J}
Let $(M,g,I,\omega,\Omega)$ be a connected $2n$-dimensional Calabi-Yau manifold and $H$ a connected Lie group which acts on $M$ preserving $I$ and has a moment map $\mu$. Let $\mathfrak{h}$ be the Lie algebra of $H$ and $L$ a Lagrangian submanifold of $M$ that has a local Lagrangian angle $\theta$. Let $c$ be an element of $\mathfrak{h}^\ast$ and $V_c$ a submanifold of $M$ that satisfies $V_c\subset\mu^{-1}(c)\cap L$. Assume \em{(Imm-istp)}\em{}, \em{(Istp-cnt)}\em{}, \em{(Lag-dim)}\em{}, and the following \em{(LagAng-$H$)}\em{}:
\begin{enumerate}\setlength{\labelsep}{1em}\setlength{\itemindent}{5em}
\item[\em{(LagAng-$H$)}]For any $p\in V_c$ and any $\xi\in\mathfrak{h}$, the following \em{(i)}\em{} and \em{(ii)}\em{} hold:
\begin{enumerate}\setlength{\labelsep}{1em}\setlength{\itemindent}{7em}
\item[\em{(i)}]$\xi^\#_p\in\mathrm{T}^\perp_pL\oplus\mathrm{T}_pV_c$,
\item[\em{(ii)}]$\xi^\#_p\notin\mathrm{T}_pV_c\backslash\{0\}$.
\end{enumerate}
\end{enumerate}
Define $\phi_c : (H/K)\times V_c \to M$ as in Proposition \ref{Claim.H}.
Then locally the following holds:
$$(\phi_c^*\Omega)_{(hK,p)} = \pm e^{\sqrt{-1}\theta_c}\mathrm{vol}_{\phi_c^*g},$$
where
$$
\begin{aligned}
\theta_c(hK,p) =\langle a_H, \eta_1+\cdots +\eta_l\rangle + \theta(p) -\frac{\pi}{2}\dim(H/K),\\
\eta_1,\cdots,\eta_l \in\mathfrak{h} \mathrm{\ such\ that\ } h=\exp \eta_1\cdots \exp \eta_l.
\end{aligned}
$$
\end{thm}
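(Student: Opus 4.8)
The plan is to prove the identity by evaluating $(\phi_c^*\Omega)_{(hK,p)}$ and $\mathrm{vol}_{\phi_c^*g}$ on one cleverly chosen basis of $\mathrm{T}_{(hK,p)}\big((H/K)\times V_c\big)$, which reduces the statement to a pointwise computation in the Calabi-Yau vector space $(\mathrm{T}_pM,g_p,I_p)$ together with the scalar factor furnished by Proposition \ref{Claim.C}. First I would observe that the hypotheses of Proposition \ref{Claim.H} are all present: (LagAng-$H$)(ii) is exactly (Imm-$H$) for $V_c$, and $V_c\subset L$ with $L$ Lagrangian forces $V_c$ to be isotropic, that is (Istp-$V_c$); hence $\phi_c$ is a Lagrangian immersion, $\phi_c^*g$ is a genuine metric, and $W:=\mathrm{T}_p(H\cdot p)+\mathrm{T}_pV_c\subset\mathrm{T}_pM$ is an $n$-dimensional \emph{Lagrangian} subspace. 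Since $H$ admits a moment map it preserves $\omega$, and it preserves $I$, so $(L_h)_{*p}\colon\mathrm{T}_pM\to\mathrm{T}_{hp}M$ is an isometry; combining this with Lemma \ref{Claim.G} (which writes $(\phi_c)_{*(hK,p)}$ as $(L_h)_{*p}$ applied to $\xi^\#_p+v$) and with Proposition \ref{Claim.C} (so $L_h^*\Omega=e^{\sqrt{-1}\langle a_H,\eta_1+\cdots+\eta_l\rangle}\Omega$), it suffices to pick an orthonormal basis $e_1,\dots,e_n$ of $W$, let $U_i$ be the unique vector with $(\phi_c)_{*(hK,p)}U_i=(L_h)_{*p}e_i$ (which exists since the image of $(\phi_c)_{*(hK,p)}$ is $(L_h)_{*p}(W)$ by Lemma \ref{Claim.G}), and note that $\{U_i\}$ is then $\phi_c^*g$-orthonormal, so $\mathrm{vol}_{\phi_c^*g}(U_1,\dots,U_n)=\pm1$, while $(\phi_c^*\Omega)_{(hK,p)}(U_1,\dots,U_n)=\Omega_{hp}\big((L_h)_{*p}e_1,\dots,(L_h)_{*p}e_n\big)=e^{\sqrt{-1}\langle a_H,\eta_1+\cdots+\eta_l\rangle}\,\Omega_p(e_1,\dots,e_n)$. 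Thus the theorem reduces to showing $\Omega_p(e_1,\dots,e_n)=\pm e^{\sqrt{-1}(\theta(p)-\frac{\pi}{2}\dim(H/K))}$ for a suitably adapted orthonormal basis of $W$.

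For this pointwise step (this is presumably what the preceding Lemma \ref{Claim.K} is tailored to) write $k=\dim(H/K)$, let $\pi^\perp$ be the orthogonal projection $\mathrm{T}_pM\to\mathrm{T}_p^\perp L$, and put $N:=\pi^\perp\big(\mathrm{T}_p(H\cdot p)\big)$. From (LagAng-$H$)(i) we have $\mathrm{T}_p(H\cdot p)\subset\mathrm{T}_p^\perp L\oplus\mathrm{T}_pV_c$, and from (LagAng-$H$)(ii) together with the immersion property that $\mathrm{T}_p(H\cdot p)\cap\mathrm{T}_pV_c=\{0\}$, whence also $\mathrm{T}_p(H\cdot p)\cap\mathrm{T}_pL=\{0\}$; a short dimension count using (Lag-dim) then gives $\dim N=k$ and the \emph{orthogonal} decomposition $W=N\oplus\mathrm{T}_pV_c$ with $N\subset\mathrm{T}_p^\perp L$ and $\mathrm{T}_pV_c\subset\mathrm{T}_pL$. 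Next I would use that $L$ is Lagrangian in a K\"{a}hler manifold, so $\mathrm{T}_p^\perp L=I(\mathrm{T}_pL)$: choosing orthonormal bases $e_1,\dots,e_k$ of $N$ and $e_{k+1},\dots,e_n$ of $\mathrm{T}_pV_c$ and setting $\tilde e_i:=-Ie_i\in\mathrm{T}_pL$ for $i\le k$, the Lagrangian condition on $W$ forces $\omega_p(e_i,e_{k+j})=0$, hence $g_p(\tilde e_i,e_{k+j})=0$, so $\tilde e_1,\dots,\tilde e_k,e_{k+1},\dots,e_n$ is an orthonormal basis of the Lagrangian subspace $\mathrm{T}_pL$.

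The conclusion is then immediate: $\Omega_p(e_1,\dots,e_n)=\Omega_p(I\tilde e_1,\dots,I\tilde e_k,e_{k+1},\dots,e_n)$, and since $\Omega$ is of type $(n,0)$ it is unchanged when each argument is replaced by its $(1,0)$-part, while $(IX)^{1,0}=\sqrt{-1}\,X^{1,0}$; this pulls out a factor $(\sqrt{-1})^k$, leaving $(\sqrt{-1})^k\,\Omega_p(\tilde e_1,\dots,\tilde e_k,e_{k+1},\dots,e_n)=\pm(\sqrt{-1})^k e^{\sqrt{-1}\theta(p)}$ by the defining property of the local Lagrangian angle of $L$ on the orthonormal basis of $\mathrm{T}_pL$. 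Writing $(\sqrt{-1})^k=\pm e^{-\sqrt{-1}\frac{\pi}{2}k}$ and restoring the factor $e^{\sqrt{-1}\langle a_H,\eta_1+\cdots+\eta_l\rangle}$ yields $(\phi_c^*\Omega)_{(hK,p)}=\pm e^{\sqrt{-1}\theta_c(hK,p)}\,\mathrm{vol}_{\phi_c^*g}$, with the sign $\pm$ absorbing the orientation choices and the convention-dependent ambiguity between $(\sqrt{-1})^{k}$ and $(-\sqrt{-1})^{k}$ (which is also why the $-\frac{\pi}{2}\dim(H/K)$ in the statement could be replaced by $+\frac{\pi}{2}\dim(H/K)$). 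I expect the main obstacle to be the pointwise linear algebra of the second paragraph -- establishing the orthogonal splitting $W=N\oplus\mathrm{T}_pV_c$ and, above all, using the Lagrangian property of $W$ to kill the mixed inner products $g_p(\tilde e_i,e_{k+j})$ so that rotating the normal part of the frame back by $-I$ gives a genuinely orthonormal frame of $\mathrm{T}_pL$ -- after which everything else is bookkeeping with Proposition \ref{Claim.C} and Lemma \ref{Claim.G}.
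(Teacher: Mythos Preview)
Your proof is correct and follows essentially the same route as the paper: the paper's Lemma~\ref{Claim.K} is precisely the linear-algebraic content of your second and third paragraphs (the orthogonal splitting $W=N\oplus\mathrm{T}_pV_c$ with $N\subset\mathrm{T}_p^\perp L$, and the rotation by $I$ to an orthonormal frame of $\mathrm{T}_pL$), after which the paper computes exactly as you do using Lemma~\ref{Claim.G}, Proposition~\ref{Claim.C}, and the type-$(n,0)$ identity $\Omega(X,\dots)=-\sqrt{-1}\,\Omega(IX,\dots)$. The only cosmetic difference is that the paper applies $+I$ to the normal vectors (obtaining the factor $(-\sqrt{-1})^m$) whereas you apply $-I$ (obtaining $(\sqrt{-1})^k$), a discrepancy of $(-1)^k$ that you correctly absorb into the ambient~$\pm$.
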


\begin{lem}\label{Claim.K}
Under the conditions of Theorem \ref{Claim.J}, for any $p\in V_c$ there exist
$$\xi_1,\cdots,\xi_m\in\mathfrak{h},\quad v_1,\cdots,v_{n-m},w_1,\cdots,w_m\in\mathrm{T}_pV_c$$
that satisfy the followings:
\begin{enumerate}\setlength{\labelsep}{1em}\setlength{\itemindent}{1em}
\item[(1)]For any $h\in H$,
$$
\begin{aligned}
&\left(
\cdots, (\left.\frac{d}{dt}\right|_{t=0}h\exp (t\xi_j)K,w_j),\cdots,
 \cdots (0,v_i), \cdots
\right)\\
&\hspace{4em}(i=1,\cdots, n-m,\ j=1,\cdots,m)
\end{aligned}
$$
is an orthonormal basis in $\mathrm{T}_{(hK,p)}((H/K)\times V_c)$ with respect to $\phi_c^\ast g$, 
\item[(2)]$(\xi_j)^\#_p+w_j\in\mathrm{T}_p^\perp L$ for $j=1,\cdots,m$, and
\item[(3)]$\bigg(I_p\{(\xi_1)^\#_p+w_1\},\cdots,I_p\{(\xi_m)^\#_p+w_m\},v_1,\cdots,v_{n-m}\bigg)$ is an orthonormal basis in $\mathrm{T}_pL$ with respect to $\iota^\ast g$.
\end{enumerate}
Here $m=\dim(H/K)$, and $\iota:L\to M$ is the embedding.
\end{lem}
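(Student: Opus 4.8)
My plan is to construct the required frame at a fixed point $p \in V_c$ by decomposing the tangent space $\mathrm{T}_pM$ into pieces adapted to both $L$ and the $H$-action, and then transport everything to $(hK,p)$ via the isometry $(L_h)_{*p}$ together with Lemma~\ref{Claim.G}. First I would record the consequence of (LagAng-$H$)(i): the fundamental vector fields at $p$ lie in $\mathrm{T}^\perp_pL \oplus \mathrm{T}_pV_c$. Write $W_p := \{\xi^\#_p \mid \xi \in \mathfrak{h}\} \subset \mathrm{T}^\perp_pL \oplus \mathrm{T}_pV_c$, and project $W_p$ onto the $\mathrm{T}^\perp_pL$-factor; condition (LagAng-$H$)(ii) forces this projection to be injective, so $m := \dim W_p = \dim(H/K)$ (using (Imm-istp) to identify $\dim W_p$ with the orbit dimension). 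Now I would choose $\xi_1,\dots,\xi_m \in \mathfrak{h}$ so that $(\xi_1)^\#_p,\dots,(\xi_m)^\#_p$ is a basis of $W_p$, and correct each by an element $w_j \in \mathrm{T}_pV_c$ so that $(\xi_j)^\#_p + w_j$ is precisely the $\mathrm{T}^\perp_pL$-component; this gives (2). After a Gram--Schmidt in the metric $g$, I may assume $\{(\xi_j)^\#_p + w_j\}_{j=1}^m$ is orthonormal in $\mathrm{T}^\perp_pL$.

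Next I would exploit that $L$ is Lagrangian, so $I_p$ maps $\mathrm{T}_pL$ isometrically onto $\mathrm{T}^\perp_pL$ (and vice versa). Hence $I_p\{(\xi_j)^\#_p + w_j\}$, $j=1,\dots,m$, is an orthonormal set in $\mathrm{T}_pL$. Complete it to an orthonormal basis of $\mathrm{T}_pL$ by vectors $v_1,\dots,v_{n-m}$; since $V_c \subset L$ and, by (Lag-dim), $\dim V_c = n - m$, I would want to arrange $v_1,\dots,v_{n-m} \in \mathrm{T}_pV_c$. This is where I must be slightly careful: I need the orthogonal complement inside $\mathrm{T}_pL$ of $\mathrm{span}\{I_p((\xi_j)^\#_p + w_j)\}$ to coincide with $\mathrm{T}_pV_c$. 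Equivalently, $I_p(\mathrm{T}_pV_c)$ must be orthogonal to $\mathrm{T}_pV_c$ inside $\mathrm{T}_pM$, i.e.\ $\omega_p(\mathrm{T}_pV_c, \mathrm{T}_pV_c) = 0$ — which is exactly isotropy of $V_c$, a consequence of $V_c \subset L$. So $\mathrm{T}_pV_c \perp I_p(\mathrm{T}_pV_c)$, and since $\dim \mathrm{T}_pV_c = n-m$ matches the codimension of $\mathrm{span}\{I_p((\xi_j)^\#_p+w_j)\}$ in $\mathrm{T}_pL$, I can take $\{v_i\}$ to be an orthonormal basis of $\mathrm{T}_pV_c$, giving (3). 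With this, $\{(\xi_j)^\#_p + w_j\}_j \cup \{I_p(\cdots)\}$... wait — rather, $\{(\xi_j)^\#_p + w_j\}_j \cup \{v_i\}_i$ is an orthonormal basis of $\mathrm{T}^\perp_pL \oplus \mathrm{T}_pV_c$; but I actually need an orthonormal basis of $\mathrm{T}_{(hK,p)}((H/K)\times V_c)$.

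Finally I would verify (1). By Lemma~\ref{Claim.G}, $\phi_{c*(hK,p)}(\frac{d}{dt}|_{t=0} h\exp(t\xi_j)K, w_j) = (L_h)_{*p}((\xi_j)^\#_p + w_j)$ and $\phi_{c*(hK,p)}(0, v_i) = (L_h)_{*p}(v_i)$. Since $H$ preserves $\omega$ and $I$, it preserves $g$, so $(L_h)_{*p}$ is a linear isometry $\mathrm{T}_pM \to \mathrm{T}_{hp}M$; thus the pulled-back metric $\phi_c^*g$ on $\mathrm{T}_{(hK,p)}((H/K)\times V_c)$ is computed by the inner products of $(\xi_j)^\#_p + w_j$ and $v_i$ in $\mathrm{T}_pM$, which are orthonormal by construction. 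Hence the listed tuple is $\phi_c^*g$-orthonormal, independently of $h$, proving (1). The main obstacle is the matching-of-dimensions-and-orthogonality point in the middle paragraph: one must see that isotropy of $V_c$ forces $I_p(\mathrm{T}_pV_c) \perp \mathrm{T}_pV_c$ so that the complement of the $I_p((\xi_j)^\#_p+w_j)$ inside $\mathrm{T}_pL$ can be taken to be $\mathrm{T}_pV_c$ itself; everything else is bookkeeping with the isometry $(L_h)_{*p}$ and the fact that $I_p$ swaps $\mathrm{T}_pL$ and $\mathrm{T}^\perp_pL$ isometrically for a Lagrangian $L$.
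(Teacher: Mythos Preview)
Your approach is essentially the paper's: project the fundamental vectors onto $\mathrm{T}^\perp_pL$ via (LagAng-$H$)(i), use (LagAng-$H$)(ii) to see that this projection is injective on the orbit tangent space, orthonormalize by a linear change in $\mathfrak{h}$ (your Gram--Schmidt is the paper's matrix $A\in GL(m,\mathbb{R})$), and then transport via the isometry $(L_h)_{*p}$ and Lemma~\ref{Claim.G} to obtain (1).

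There is, however, a genuine slip in your argument for (3). You claim that the orthogonal complement of $\mathrm{span}\{I_p((\xi_j)^\#_p+w_j)\}$ inside $\mathrm{T}_pL$ equals $\mathrm{T}_pV_c$ ``equivalently'' when $I_p(\mathrm{T}_pV_c)\perp\mathrm{T}_pV_c$, i.e.\ when $V_c$ is isotropic. That equivalence is false: the vectors $(\xi_j)^\#_p+w_j$ lie in $\mathrm{T}^\perp_pL$, not in $\mathrm{T}_pV_c$, so isotropy of $V_c$ alone does not give the required orthogonality. Concretely, for $v\in\mathrm{T}_pV_c$,
\[
g_p\big(v,\,I_p((\xi_j)^\#_p+w_j)\big)=\omega_p\big((\xi_j)^\#_p,v\big)+\omega_p(w_j,v).
\]
The second term vanishes because $V_c$ is isotropic, but the first term vanishes only because of the moment-map identity $\omega_p(\xi^\#_p,v)=0$ for $v\in\mathrm{T}_p\mu^{-1}(c)$ --- this is exactly Lemma~\ref{Claim.I}, which the paper invokes explicitly at this point. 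Once you add this ingredient, your proof of (3) goes through and coincides with the paper's.
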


\begin{rem}\label{Claim.L}
In Theorem \ref{Claim.J}, we do not assume the conditions (Imm-$H$) and (Istp-$V_c$) in Proposition \ref{Claim.H} to make $\phi_c$ a Lagrangian immersion. However under the conditions of Theorem \ref{Claim.J}, they hold.
\end{rem}

From Theorem \ref{Claim.J} we immediately obtain the following corollary. Constructions of special Lagrangian submanifolds are directly based on this corollary.

\begin{cor}\label{Claim.M}
Assume the conditions of Theorem \ref{Claim.J}. In addition, if $\theta\equiv\mathrm{const.}$ on $V_c$ (e.g. $L$: a special Lagrangian submanifold) and $a_H = 0$, then $\phi_c$ is a special Lagrangian immersion.
\end{cor}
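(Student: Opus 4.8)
The plan is to derive Corollary \ref{Claim.M} as an immediate specialization of Theorem \ref{Claim.J}. Since all the conditions of Theorem \ref{Claim.J} are assumed, we already know that $\phi_c : (H/K)\times V_c \to M$ is a Lagrangian immersion (this is the content of Remark \ref{Claim.L}, where the conditions (Imm-$H$) and (Istp-$V_c$) are shown to follow automatically), and that locally
$$(\phi_c^*\Omega)_{(hK,p)} = \pm e^{\sqrt{-1}\theta_c}\,\mathrm{vol}_{\phi_c^*g},$$
with the explicit formula
$$\theta_c(hK,p) = \langle a_H, \eta_1+\cdots+\eta_l\rangle + \theta(p) - \frac{\pi}{2}\dim(H/K).$$
So the only thing left to check is that, under the two extra hypotheses $a_H = 0$ and $\theta \equiv \mathrm{const.}$ on $V_c$, this function $\theta_c$ is constant on the connected components of $(H/K)\times V_c$, which is precisely the definition of a special Lagrangian immersion (with the appropriate local phase).

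First I would observe that the hypothesis $a_H = 0$ kills the first term: $\langle a_H, \eta_1+\cdots+\eta_l\rangle = 0$ for every choice of $\eta_1,\dots,\eta_l\in\mathfrak{h}$ with $h = \exp\eta_1\cdots\exp\eta_l$, so this term contributes nothing regardless of the $H/K$-coordinate. (By Corollary \ref{Claim.E}, $a_H = 0$ is equivalent to the $H$-action preserving $\Omega$, which is the natural hypothesis that makes the construction produce constant phase; it is worth noting this interpretation.) Second, the hypothesis $\theta \equiv \theta_0$ on $V_c$ makes the middle term the constant $\theta_0$. The last term $-\frac{\pi}{2}\dim(H/K)$ is manifestly a constant depending only on fixed dimension data. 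Hence $\theta_c(hK,p) = \theta_0 - \frac{\pi}{2}\dim(H/K)$ is constant on all of $(H/K)\times V_c$, so the local Lagrangian angle of $\phi_c$ is constant and $\phi_c$ is a special Lagrangian immersion of phase $\theta_0 - \frac{\pi}{2}\dim(H/K)$.

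There is essentially no obstacle here — the work has all been done in Theorem \ref{Claim.J} and Remark \ref{Claim.L}, and the corollary is a matter of reading off the formula. The only minor point requiring a word of care is the parenthetical remark that a special Lagrangian submanifold $L$ furnishes an instance where $\theta \equiv \mathrm{const.}$ on $V_c$: this is immediate because if $L$ is special Lagrangian with phase $\theta_0$ then its Lagrangian angle is globally $\theta_0$, hence in particular constant on the submanifold $V_c \subset L$. One should also recall that "special Lagrangian immersion" is understood in the sense of constant (local) Lagrangian angle, matching Definition as stated for submanifolds; since $\phi_c$ is already Lagrangian and the computed $\theta_c$ is constant, the claim follows directly.

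\begin{proof}[Proof of Corollary \ref{Claim.M}]
By the conditions of Theorem \ref{Claim.J} (and Remark \ref{Claim.L}), $\phi_c$ is a Lagrangian immersion and locally
$$(\phi_c^*\Omega)_{(hK,p)} = \pm e^{\sqrt{-1}\theta_c}\,\mathrm{vol}_{\phi_c^*g},\qquad
\theta_c(hK,p) = \langle a_H, \eta_1+\cdots+\eta_l\rangle + \theta(p) - \frac{\pi}{2}\dim(H/K).$$
Since $a_H = 0$, the term $\langle a_H, \eta_1+\cdots+\eta_l\rangle$ vanishes for every $h = \exp\eta_1\cdots\exp\eta_l\in H$. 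Since $\theta\equiv\theta_0$ is constant on $V_c$, the term $\theta(p)$ equals $\theta_0$ for all $p\in V_c$. The remaining term $-\frac{\pi}{2}\dim(H/K)$ is a constant. Therefore
$$\theta_c(hK,p) = \theta_0 - \frac{\pi}{2}\dim(H/K)$$
is constant on $(H/K)\times V_c$, so the local Lagrangian angle of $\phi_c$ is constant. Hence $\phi_c$ is a special Lagrangian immersion. If in particular $L$ is a special Lagrangian submanifold of phase $\theta_0$, then its Lagrangian angle is globally $\theta_0$, so $\theta\equiv\theta_0$ on $V_c\subset L$ and the hypothesis is satisfied.
\end{proof}
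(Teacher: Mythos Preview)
Your proof is correct and follows exactly the approach implicit in the paper: the paper states that Corollary \ref{Claim.M} is obtained immediately from Theorem \ref{Claim.J} without giving an explicit argument, and your proof simply spells out that the formula for $\theta_c$ becomes constant once $a_H=0$ and $\theta|_{V_c}$ is constant.
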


Now we prove Lemma \ref{Claim.K}, Theorem \ref{Claim.J}, and Remark \ref{Claim.L}.

\begin{proof}[Proof of Lemma \ref{Claim.K}]
First we show Lemma \ref{Claim.K} (1) and (2).
By Lemma \ref{Claim.G}, we have
{\small$$
\begin{aligned}
\left(\phi_{c\ast(hK,p)}(\left.\frac{d}{dt}\right|_{t=0}h\exp (t\xi_j)K,w_j),\phi_{c\ast(hK,p)}(0,v_i)\right)
=(L_{h\ast p}\{(\xi_j)^\#_p+w_j\},L_{h\ast p}v_i).
\end{aligned}
$$}
Since $L_{h\ast p}$ is isometric, it is enough for showing (1) and (2) to verify that there exist $\xi_j$, $v_i$, and $w_j$ $(i=1,\cdots, n-m,\ j=1,\cdots, m)$ such that $((\xi_j)^\#_p+w_j,v_i)$ is an orthonormal system of $\mathrm{T}_pM$ and $(\xi_j)^\#_p+w_j\in\mathrm{T}^\perp_pL$.

Noting (Lag-dim), let $(v_i)\ (i=1,\cdots,n-m)$ be an orthonormal basis of $\mathrm{T}_pV_c$ with respect to the metric on $V$ induced from $g$. By (LagAng-$H$), it holds that $\mathrm{T}_p(H\cdot p)\cap\mathrm{T}_pL=\{0\}$. Hence, noting (Lag-dim) again, we can take $\eta_j \in\mathfrak{h}\ (j=1,\cdots,m)$ such that $((\eta_j)^\#_p)$ is a basis of $\mathrm{T}_p(H\cdot p)$ and $((\eta_1)^\#_p,\cdots,(\eta_m)^\#_p,v_1,\cdots,v_{n-m})$ is linearly independent in $\mathrm{T}_pM$.

By (LagAng-$H$), there exist $u_j\in\mathrm{T}_p^\perp L\backslash\{0\}$ and $z_j\in\mathrm{T}_pV_c\ (j=1,\cdots, m)$ that decompose $(\eta_j)^\#_p$ into direct summations as follows:
$$(\eta_j)^\#_p=u_j+z_j\quad (j=1,\cdots,m).$$

$(u_j)$ is linearly independent. In fact, if $u_1$ is expressed by $u_1=b_2u_2+\cdots +b_mu_m$ for $b_j\in\mathbb{R}$ such that ${}^t(b_2,\cdots,b_m)\neq\mathbf{0}$, we have
$$
\begin{aligned}
&(\eta_1)^\#_p-z_1=b_2((\eta_2)^\#_p-z_2)+\cdots +b_m((\eta_m)^\#_p-z_m)\\
\Leftrightarrow\ &(\eta_1)^\#_p-\{b_2(\eta_2)^\#_p+\cdots +b_m(\eta_m)^\#_p\}
=z_1-(b_2z_2+\cdots +b_mz_m).
\end{aligned}
$$
Because the left-hand side belongs to $\mathrm{T}_p(H\cdot p)$, there exists $\eta\in\mathfrak{h}$ such that $\eta^\#_p$ equals to the left-hand side. If $\eta^\#_p\neq 0$, then $\eta^\#_p\in\mathrm{T}_pV_c\backslash\{0\}$ because the right-hand side belongs to $\mathrm{T}_pV_c$. This is contrary to (LagAng-$H$). On the other hand, if $\eta^\#_p=0$, this is contrary to that $(\eta_j)^\#_p$ is linearly independent because the left-hand side equals to $0$. For $j=2,\cdots,m$, we can verify the same assertion. Thus $(u_j)$ is linearly independent.

Therefore, noting $u_j\in\mathrm{T}^\perp_pL$, there exists $A\in GL(m,\mathbb{R})$ such that $(\tilde{u}_1\cdots, \tilde{u}_m) = (u_1,\cdots, u_m)A$ is an orthonormal system in $\mathrm{T}^\perp_pL$. Because $\mathrm{T}^\perp_pL\perp \mathrm{T}_pV_c$, $(v_i,\tilde{u}_j)$ is an orthonormal system in $\mathrm{T}_pM$.

Thus, if we define $\xi_j\in\mathfrak{h}$ and $w_j\in\mathrm{T}_pV_c$ by $((\xi_1)^\#_p,\cdots,(\xi_m)^\#_p)=((\eta_1)^\#_p,\cdots,(\eta_m)^\#_p)A$ and $(w_1,\cdots, w_m)=(-z_1,\cdots,-z_m)A$, then $\tilde{u}_j=(\xi_j)^\#_p+w_j$ and Lemma \ref{Claim.K} (1) and (2) hold.

Next we show Lemma \ref{Claim.K} (3). 
By (1) and (2), it is enough for showing the claim of (3) to verify $I_p\{(\xi_j)^\#_p+w_j\}\in\mathrm{T}^\perp_p V_c\ (j=1,\cdots,m)$.

$I_p(\xi_j)^\#_p\in\mathrm{T}^\perp_pV_c$ because $0=\omega_p((\xi_j)^\#_p,v_i)=g_p(I_p(\xi_j)^\#_p,v_i)$ by Lemma \ref{Claim.I}. 
On the other hand, $I_pw_j\in\mathrm{T}^\perp_pV_c$ because $V_c$ is isotropic and $0=\omega_p(w_j,v_i)=g_p(I_pw_j,v_i)$. Thus Lemma \ref{Claim.K} (3) has been verified.
\end{proof}
\begin{proof}[Proof of Theorem \ref{Claim.J}]
Let $\mathcal{X}^{(0,1)}(M)$ be the set of complex vector fields of type $(0,1)$ on $M$. For any $\eta\in\mathfrak{h}$, it holds that $\eta^\#+\sqrt{-1}I\eta^\#\in\mathcal{X}^{(0,1)}(M)$. Since $\Omega$ is a complex differential form of type $(n,0)$ on $M$, we have
$$\mathfrak{i}(\eta^\#)\Omega=-\sqrt{-1}\mathfrak{i}(I\eta^\#)\Omega.$$

Take $(\xi_j,v_i,w_j)$ in Lemma \ref{Claim.K} for $i=1,\cdots, n-m$ and $j=1,\cdots, m$. Then, noting $(L^\ast_h\Omega)=e^{\sqrt{-1}\langle a_H,\eta_1+\cdots +\eta_l\rangle}\Omega$, we have
{\small$$
\begin{aligned}
&(\phi_c^*\Omega)_{(hK,p)}\left(\cdots, (\left.\frac{d}{dt}\right|_{t=0}h\exp (t\xi_j) K,w_j), \cdots, \cdots, (0,v_i), \cdots\right)\\
=&\Omega_{hp}(\cdots, (L_h)_{*p}\{(\xi_j)^\#_p+w_j\}, \cdots, \cdots, (L_h)_{*p}v_i, \cdots)\\
=&(L_h^*\Omega)_p(\cdots, (\xi_j)^\#_p+w_j, \cdots, \cdots, v_i, \cdots)\\
=&(-\sqrt{-1})^m(L_h^*\Omega)_p(\cdots, I_p\{(\xi_j)^\#_p+w_j\}, \cdots, \cdots, v_i, \cdots)\\
=&(-\sqrt{-1})^me^{\sqrt{-1}\langle a_H, \eta_1+\cdots +\eta_l\rangle}\Omega_p(\cdots, I_p\{(\xi_j)^\#_p+w_j\}, \cdots, \cdots, v_i, \cdots)\\
=&(-\sqrt{-1})^me^{\sqrt{-1}\langle a_H, \eta_1+\cdots +\eta_l\rangle}(\iota^*\Omega)_p(\cdots, I_p\{(\xi_j)^\#_p+w_j\}, \cdots, \cdots, v_i, \cdots)\\
=&(-\sqrt{-1})^me^{\sqrt{-1}\langle a_H, \eta_1+\cdots +\eta_l\rangle}e^{\sqrt{-1}\theta}(\mathrm{vol}_{\iota^*g})_p(\cdots, I_p\{(\xi_j)^\#_p+w_j\}, \cdots, \cdots, v_i, \cdots)\\
=&\pm e^{\sqrt{-1}(\langle a_H, \eta_1+\cdots +\eta_l\rangle+\theta-\frac{\pi}{2}m)}.
\end{aligned}
$$}
By Lemma \ref{Claim.K} (1), we have
$$\mathrm{vol}_{\phi_c^\ast g}\left(\cdots, (\left.\frac{d}{dt}\right|_{t=0}h\exp (t\xi_j) K,w_j), \cdots, \cdots, (0,v_i), \cdots\right)=\pm 1.$$
Thus Theorem \ref{Claim.J} has been proved.
\end{proof}
\begin{proof}[Proof of Remark \ref{Claim.L}]
(Imm-$H$) holds by (LagAng-$H$).
Because $L$ is a Lagrangian submanifold and $V_c\subset L$, (Istp-$V_c$) holds.
Thus we see that Remark \ref{Claim.L} holds.
\end{proof}

Joyce pointed out in \cite{Joyce2002} that the commutativity of a Lie group is a necessary condition for the group action to be perpendicular to whole of $L$. However, to construct a special Lagrangian submanifold, we need for a group action to be perpendicular only to $V_c\subset L$. Similarly the condition that $L$ is a special Lagrangian submanifold, that is, the condition that the Lagrangian angle is constant on $L$ is reduced to on $V_c$. The perpendicular condition is also weakened as above. This situation, roughly speaking, indicates that a special Lagrangian submanifold may be constructed by Corollary \ref{Claim.M}, if $H\cdot V_c$ (not necessarily each fundamental vector $\xi^\#_p$ at $p\in V_c$) is perpendicular to $L$ for some $c\in Z(\mathfrak{h}^\ast)$.

As a special case of the condition (LagAng-$H$) if we assume that each fundamental vector $\xi^\#_p$ is perpendicular to $L$, we obtain the next corollary. In this case we need not assume (Istp-cnt) (see Remark \ref{Claim.O}).

\begin{cor}\label{Claim.N}
Let $(M,g,I,\omega,\Omega)$ be a connected $2n$-dimensional Calabi-Yau manifold and $H$ a connected Lie group which acts on $M$ preserving $I$ and has a moment map $\mu$. Let $\mathfrak{h}$ be the Lie algebra of $H$ and $L$ a Lagrangian submanifold of $M$ with a local Lagrangian angle $\theta$. Let $c$ be an element of $\mathfrak{h}^\ast$ and $V_c$ a submanifold of $M$ such that $V_c\subset\mu^{-1}(c)\cap L$. Assume \em{(Imm-istp)}\em{}, \em{(Lag-dim)}\em{}, and \em{(LagAng-$H$)$^\prime$}\em{} as follows:
\begin{enumerate}\setlength{\labelsep}{1em}\setlength{\itemindent}{5em}
\item[\em{(LagAng-$H$)$^\prime$}]$\xi^\#_p\perp\mathrm{T}_pL$ for any $p\in V_c$, and any $\xi\in\mathfrak{h}$.
\end{enumerate}
Define $\phi_c:(H/K)\times V_c\to M$ as in Proposition \ref{Claim.H}. Then locally the following holds:
$$(\phi_c^*\Omega)_{(hK,p)} = \pm e^{\sqrt{-1}\theta_c}\mathrm{vol}_{\phi_c^*g},$$
where
$$
\begin{aligned}
\theta_c(hK,p) =\langle a_H, \eta_1+\cdots +\eta_l\rangle + \theta(p) -\frac{\pi}{2}\dim(H/K),\\
\eta_1,\cdots,\eta_l \in\mathfrak{h}\ \mathrm{such\ that}\ h=\exp \eta_1\cdots\exp \eta_l.
\end{aligned}
$$
\end{cor}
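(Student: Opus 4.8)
The plan is to derive Corollary~\ref{Claim.N} directly from Theorem~\ref{Claim.J}, using two observations: that the strong perpendicularity hypothesis (LagAng-$H$)$^\prime$ is a special case of (LagAng-$H$), and that the hypothesis (Istp-cnt), which Theorem~\ref{Claim.J} assumes, is never actually used once (LagAng-$H$)$^\prime$ holds.

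First I would check that (LagAng-$H$)$^\prime$ implies (LagAng-$H$). Fix $p\in V_c$ and $\xi\in\mathfrak{h}$. Since $V_c\subset L$, we have $\mathrm{T}_pV_c\subset\mathrm{T}_pL$, so $\xi^\#_p\perp\mathrm{T}_pL$ forces $\xi^\#_p\in\mathrm{T}^\perp_pL\subset\mathrm{T}^\perp_pL\oplus\mathrm{T}_pV_c$, which is (i); and if moreover $\xi^\#_p\in\mathrm{T}_pV_c$ then $\xi^\#_p$ is orthogonal to itself, hence $\xi^\#_p=0$, which is (ii). In particular (Imm-$H$) holds on $V_c$, and since $V_c\subset L$ is isotropic, (Istp-$V_c$) holds; so Proposition~\ref{Claim.F} shows $\phi_c$ is an immersion and $\phi_c^\ast g$ is a Riemannian metric on $(H/K)\times V_c$.

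Next I would trace where (Istp-cnt) entered the earlier development: only in Proposition~\ref{Claim.H}, where it made the $H$-orbits isotropic (the vanishing of the term $\omega_p((\xi_1)^\#_p,(\xi_2)^\#_p)$); the proofs of Lemma~\ref{Claim.I}, Lemma~\ref{Claim.K}, and Theorem~\ref{Claim.J} itself never invoke it. Under (LagAng-$H$)$^\prime$ the isotropy of the orbit through a point of $V_c$ is automatic: for a Lagrangian submanifold $L$ one has $\mathrm{T}^\perp_pL=I_p\mathrm{T}_pL$ (from $\omega(X,Y)=g(IX,Y)$ one gets $I\mathrm{T}_pL\subset\mathrm{T}^\perp_pL$, and both are $n$-dimensional), so for $\xi,\eta\in\mathfrak{h}$ and $p\in V_c$ we have $I_p\xi^\#_p\in I_p\mathrm{T}^\perp_pL=\mathrm{T}_pL$ while $\eta^\#_p\in\mathrm{T}^\perp_pL$, whence
$$\omega_p(\xi^\#_p,\eta^\#_p)=g_p(I_p\xi^\#_p,\eta^\#_p)=0.$$
Together with Lemma~\ref{Claim.I} ($\omega_p(\xi^\#_p,v)=0$ for $v\in\mathrm{T}_pV_c$) and the isotropy of $V_c$, the computation in the proof of Proposition~\ref{Claim.H} yields $\phi_c^\ast\omega\equiv0$, so with (Lag-dim) the map $\phi_c$ is a Lagrangian immersion. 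Then Lemma~\ref{Claim.K} goes through unchanged — now with $w_j=0$, since $(\eta_j)^\#_p\in\mathrm{T}^\perp_pL$ directly — and the computation of $(\phi_c^\ast\Omega)_{(hK,p)}$ on the orthonormal frame of Lemma~\ref{Claim.K}, using $\mathfrak{i}(\eta^\#)\Omega=-\sqrt{-1}\,\mathfrak{i}(I\eta^\#)\Omega$ (since $\Omega$ has type $(n,0)$) and $L_h^\ast\Omega=e^{\sqrt{-1}\langle a_H,\eta_1+\cdots+\eta_l\rangle}\Omega$ from Proposition~\ref{Claim.C}, gives $(\phi_c^\ast\Omega)_{(hK,p)}=\pm e^{\sqrt{-1}\theta_c}\mathrm{vol}_{\phi_c^\ast g}$ with $\theta_c$ as stated, exactly as in the proof of Theorem~\ref{Claim.J}.

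I do not expect a serious obstacle here: the argument is a careful audit of the earlier proofs rather than a new construction, and the single genuinely new ingredient is the elementary identity $\mathrm{T}^\perp_pL=I_p\mathrm{T}_pL$, which turns the perpendicularity (LagAng-$H$)$^\prime$ into isotropy of the orbits meeting $V_c$. The one point to watch is that this only gives isotropy of $H\cdot p$ for $p\in V_c$, not for every $p\in\mu^{-1}(c)$, so $c$ need not lie in $Z(\mathfrak{h}^\ast)$ — but that is precisely why (Istp-cnt) can be dropped, and no step in the proof asks for more.
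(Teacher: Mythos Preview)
Your argument is correct and matches the paper's approach: the key computation --- that $\omega_p(\xi^\#_p,\eta^\#_p)=g_p(I_p\xi^\#_p,\eta^\#_p)=0$ because $I_p\xi^\#_p\in\mathrm{T}_pL$ and $\eta^\#_p\in\mathrm{T}^\perp_pL$ --- is exactly what the paper does in the proof of Remark~\ref{Claim.O}. The one inaccuracy is your closing remark that ``$c$ need not lie in $Z(\mathfrak{h}^\ast)$'': in fact it must. For a Hamiltonian action one has $\omega_p(\xi^\#_p,\eta^\#_p)=\langle\mu(p),[\eta,\xi]\rangle$, so isotropy of $H\cdot p$ at a single point $p\in\mu^{-1}(c)$ already forces $c$ to annihilate $[\mathfrak{h},\mathfrak{h}]$, i.e.\ $c\in Z(\mathfrak{h}^\ast)$ (this is the equivalence recorded at the end of Section~\ref{Section.B.2} and invoked in Remark~\ref{Claim.O}). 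So (Istp-cnt) is not merely unnecessary but actually holds, and you could have applied Theorem~\ref{Claim.J} as a black box rather than re-auditing its proof; your route is valid but slightly longer than needed.
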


\begin{rem}\label{Claim.O}
Under the conditions of Corollary \ref{Claim.N}, (Imm-$H$), (Istp-$V_c$), and (Istp-cnt) hold.
\end{rem}

\begin{proof}[Proof of Remark \ref{Claim.O}]
 (Imm-$H$) holds by (LagAng-$H$)$^\prime$. (Istp-$V_c$) holds as in Remark \ref{Claim.L}. Finally to show (Istp-cnt), we fix an arbitrary point $hp\in H\cdot p\ (h\in H)$. We have
$$\mathrm{T}_{hp}(H\cdot p) = \left.\left\{\left.\frac{d}{dt}\right|_{t=0}h\exp (t\xi) p \ \right|\  \xi\in\mathfrak{h}\right\}=\{(L_h)_{\ast p}\xi^\#_p\mid \xi\in\mathfrak{h}\}.$$
Noting $I_p\xi^\#_p\in\mathrm{T}_pL$ and $\eta^\#_p\in\mathrm{T}^\perp_p L$ because
of the assumption that $L$ is Lagrangian and (LagAng-$H$)$^\prime$, we have
$$
\omega_{hp}((L_h)_{*p}\xi^\#_p, (L_h)_{*p}\eta^\#_p)
=(L_h^*\omega)_p(\xi^\#_p, \eta^\#_p)
=g_p(I_p\xi^\#_p, \eta^\#_p)
=0.
$$
Therefore $H\cdot p$ is isotropic. This is equivalent to $\mu(p)\in Z(\mathfrak{h^\ast})$.
\end{proof}
\begin{cor}\label{Claim.Z}
Assume the conditions of Corollary \ref{Claim.N}. In addition, if $\theta\equiv\mathrm{const.}$ on $V_c$ (e.g. $L$: a special Lagrangian submanifold) and $a_H = 0$, then $\phi_c$ is a special Lagrangian immersion.
\end{cor}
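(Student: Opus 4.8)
The plan is to deduce Corollary \ref{Claim.Z} directly from Corollary \ref{Claim.N} exactly as Corollary \ref{Claim.M} is deduced from Theorem \ref{Claim.J}. By Corollary \ref{Claim.N}, under the stated hypotheses the map $\phi_c:(H/K)\times V_c\to M$ satisfies, locally,
$$(\phi_c^*\Omega)_{(hK,p)} = \pm e^{\sqrt{-1}\theta_c}\,\mathrm{vol}_{\phi_c^*g},\qquad \theta_c(hK,p) = \langle a_H,\eta_1+\cdots+\eta_l\rangle + \theta(p) - \tfrac{\pi}{2}\dim(H/K),$$
where $h=\exp\eta_1\cdots\exp\eta_l$. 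In particular $\phi_c$ is a Lagrangian immersion (this is part of the content of Corollary \ref{Claim.N}, together with Remark \ref{Claim.O} which supplies the missing conditions (Imm-$H$), (Istp-$V_c$), (Istp-cnt) of Proposition \ref{Claim.H}), so it has a well-defined local Lagrangian angle, and by the displayed formula that angle equals $\theta_c$ up to sign.

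First I would invoke the hypothesis $a_H=0$: by Corollary \ref{Claim.E} this is equivalent to the $H$-action preserving $\Omega$, and in any case it forces the term $\langle a_H,\eta_1+\cdots+\eta_l\rangle$ in $\theta_c$ to vanish identically, regardless of the (non-unique) choice of $\eta_1,\dots,\eta_l$ representing $h$. Next I would use the hypothesis that $\theta$ is constant on $V_c$, say $\theta\equiv\theta_0$ there; note that if $L$ is a special Lagrangian submanifold this holds automatically with $\theta_0$ the phase of $L$. Substituting both facts into the formula gives
$$\theta_c(hK,p) = \theta_0 - \tfrac{\pi}{2}\dim(H/K),$$
which is independent of $(hK,p)\in(H/K)\times V_c$. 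Hence the Lagrangian angle of $\phi_c$ is (locally, up to sign) the constant $\theta_0-\tfrac{\pi}{2}\dim(H/K)$, so $\phi_c$ is a special Lagrangian immersion by the definition of special Lagrangian submanifolds.

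There is essentially no obstacle here: the corollary is a direct specialization of Corollary \ref{Claim.N} obtained by killing the only two non-constant contributions to $\theta_c$. The one point deserving a word of care is the sign ambiguity $\pm$ in the identity $(\phi_c^*\Omega) = \pm e^{\sqrt{-1}\theta_c}\mathrm{vol}_{\phi_c^*g}$: since we work only locally (and $H/K\times V_c$ need not be orientable) the relevant statement is that $\theta_c$ is constant as a local Lagrangian angle, which is exactly what "special Lagrangian" requires, so the sign causes no trouble. If one prefers, one can also simply observe that the proof of Corollary \ref{Claim.M} applies verbatim with Theorem \ref{Claim.J} replaced by Corollary \ref{Claim.N}, since the two differ only in replacing hypothesis (LagAng-$H$) by the stronger (LagAng-$H$)$^\prime$ while producing the same formula for $\theta_c$.
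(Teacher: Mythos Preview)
Your proof is correct and matches the paper's approach: the paper gives no explicit proof of Corollary~\ref{Claim.Z}, treating it as an immediate consequence of the formula in Corollary~\ref{Claim.N} in exactly the same way that Corollary~\ref{Claim.M} follows from Theorem~\ref{Claim.J}. Your observation that the argument for Corollary~\ref{Claim.M} applies verbatim is precisely the intended reasoning.
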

\section{Examples in $\mathrm{T}^\ast S^n$}\label{Section.E}

In this section, with the use of the results above, we construct non-trivial examples of special Lagrangian submanifolds in non-flat Calabi-Yau manifolds $\mathrm{T}^\ast S^n$ which equipped with the Stenzel metrics.
In Subsection \ref{Section.E.1}, we review some notions about the Stenzel metrics on $\mathrm{T}^\ast S^n$, and make sure some facts that is used to construct our examples. In Subsection \ref{Section.E.3}, we construct two examples by using the actions of an abelian group. One of them is based on Corollary \ref{Claim.J} of generalized perpendicular conditions.
In Subsection \ref{Section.E.2}, we construct an example based on Corollary \ref{Claim.Z} by using the actions of a non-abelian group.

Through this section, we use some notations. We denote $\mathbf{e}_i$ by the column $k$-vector whose $i$-th element equals to $1$ and the any other element equals to $0$ in $k$-dimensional real or complex Euclidean space for some $k\in\mathbb{N}$. We also denote $\xi_{ij}$ by
$$\xi_{ij} = E_{ji}-E_{ij} \in M(k,\mathbb{R}),$$
here $E_{ij}$ denotes the $k\times k$-matrix whose $(i,j)$-component is $1$ and all the others are $0$ for some $k\in\mathbb{N}$.
\subsection{Stenzel metric on $\mathrm{T}^\ast S^n$}\label{Section.E.1}

In \cite{Stenzel}, Stenzel constructed complete Ricci-flat K\"{a}hler metrics on the cotangent bundles of compact rank one symmetric spaces. This gives us examples of non-flat Calabi-Yau manifolds. In this paper, we denote this Calabi-Yau structure by $(\mathrm{T}^\ast S^n, I, \omega_{\mathrm{Stz}},\Omega_{\mathrm{Stz}})$. We construct our examples of special Lagrangian submanifolds in $(\mathrm{T}^\ast S^n, I, \omega_{\mathrm{Stz}},\Omega_{\mathrm{Stz}})$.

We identify the tangent bundle and the cotangent bundle of the $n$-sphere $S^n$, and describe it by
$$\mathrm{T}^\ast S^n
=
\{
(x,\xi)\in\mathbb{R}^{n+1}\times\mathbb{R}^{n+1}\mid\|x\|=1, x\centerdot\xi=0
\},
$$
where ``\ $\centerdot$\ ''\ is the canonical real inner product on the Euclidean space $\mathbb{R}^{n+1}$ and $\|x\|=\sqrt{x\centerdot x}$ for each $x\in\mathbb{R}^{n+1}$. We occasionally denote ${}^t(x_1,\cdots,x_{n+1}), {}^t(\xi_1,\cdots,\xi_{n+1})$ by $x, \xi$ respectively.
$SO(n+1)$ acts on $\mathrm{T}^\ast S^n$ by $h\cdot(x,\xi)=(hx,h\xi)$ for $h\in SO(n+1)$ with cohomogeneity one. The principal orbit at a point $(x,\xi)$ equals to a sphere bundle with a radius of $\|\xi\|=\sqrt{\xi\centerdot\xi}$.

Let $Q^n$ be a complex quadric hypersurface in $\mathbb{C}^{n+1}$ as follows:
$$Q^n=\left\{z={}^t(z_1,\cdots,z_{n+1})\in\mathbb{C}^{n+1}\ \left|\ \sum_{i=1}^{n+1}z_i^2=1\right\}\right..$$
Sz\"{o}ke gave an $SO(n+1)$-equivariant diffeomorphism from $\mathrm{T}^\ast S^n$ to $Q^n$ in \cite{Szoke} as follows:
$$
\begin{array}{cccl}
\Phi:&\mathrm{T}^\ast S^n&\to&Q^n\\
&\rotatebox{90}{$\in$}&&\rotatebox{90}{$\in$}\\
&(x,\xi)&\mapsto&\cosh(\|\xi\|)x+\sqrt{-1}\dfrac{\sinh(\|\xi\|)}{\|\xi\|}\xi.
\end{array}
$$
We can induce a complex structure to $Q^n$ from $\mathbb{C}^{n+1}$. Stenzel constructed Ricci-flat K\"{a}hler metrics with respect to these complex structures. We denoted this by $I$ above. Therefore when we use the complex structure for studying the perpendicular conditions later, we do the calculations not in $\mathrm{T}^\ast S^n$ but in $Q^n$. The K\"{a}hler form $\omega_{Stz}$ that Stenzel constructed is given as follows:
$$
\omega_{\mathrm{Stz}}=\sqrt{-1}\partial\bar{\partial}u(r^2)=\sqrt{-1}\sum_{i,j=1}^{n+1}\frac{\partial^2}{\partial z_i\partial\bar{z}_j}u(r^2)dz_i\wedge d\bar{z}_j,
$$
here $r^2=\|z\|^2=\sum_{i=1}^{n+1}z_i\bar{z}_i$ and $u$ is a smooth real function satisfies the following ordinary differential equation:
\begin{equation}\label{to}
\frac{d}{dt}(U^\prime(t))^n=cn(\sinh t)^{n-1}\quad (c=\mathrm{const.}>0),
\end{equation}
here $U(t)=u(\cosh t)$.
The functions $U$ and $u$ has properties that $U^\prime(t)>0$, $U^{\prime\prime}(t)>0$, and $u^\prime(t)>0$ if $t>0$ under appropriate choices of a constant of integration (see \cite{Stenzel}).

The actions of $SO(n+1)$ preserve the Calabi-Yau structure of $(\mathrm{T}^\ast S^n,I,\omega_{\mathrm{Stz}},\Omega_{\mathrm{Stz}})$. Hence, for $a_H = a_{SO(n+1)}\in\mathfrak{h}^\ast =\mathfrak{so}(n+1)^\ast$ of Proposition \ref{Claim.C} determined by $(\mathrm{T}^\ast S^n, I, \omega_{\mathrm{Stz}}, \Omega_{\mathrm{Stz}}, SO(n+1))$, we have $a_H =0$ by Corollary \ref{Claim.E}.

A moment map $\mu:Q^n\to\mathfrak{so}(n+1)^\ast$ with respect to $(\mathrm{T}^\ast S^n, \omega_\mathrm{Stz}, SO(n+1))$ is given in \cite{Anciaux} as follows:
\begin{equation}\label{na}
(\mu(z))(X)=u^\prime(r^2)Iz\centerdot Xz,\quad (z\in Q^n, X\in\mathfrak{so}(n+1)),
\end{equation}
here ``\ $\centerdot$\ '' denotes the canonical real inner product on $\mathbb{C}^{n+1}$.

Finally, we give a basic fact for preparing an original special Lagrangian submanifold to construct a new one: Karigiannis and Min-Oo showed in \cite{Karigiannis and Min-Oo} that a conormal bundle $\mathrm{T}^{\ast\perp}N$ in $\mathrm{T}^\ast S^n$ for a submanifold $N$ in $S^n$ is a special Lagrangian submanifold if and only if $N$ is an austere submanifold of $S^n$. Especially, a totally geodesic submanifold of $S^n$ is an austere submanifold.

\subsection{The case of $H=U(1), L_1=\mathrm{T}^{\ast\perp}S^2, L_2=\mathrm{T}^{\ast\perp}S^1\subset\mathrm{T}^\ast S^5$}\label{Section.E.3}

Let $M$ be the cotangent bundle of $5$-sphere $\mathrm{T}^\ast S^5$, $L_1$ the conormal bundle of a totally geodesic submanifold $S^2$ of $S^5$, and $L_2$ the conormal bundle of a totally geodesic submanifold $S^1$ of $S^5$ as follows:
$$L_1(\cong\mathrm{T}^{\ast\perp}S^2)=\left\{
\left.(
\left[\begin{array}{c}
x_1\\
0\\
x_3\\
0\\
x_5\\
0
\end{array}\right],
\left[\begin{array}{c}
0\\
\xi_2\\
0\\
\xi_4\\
0\\
\xi_6
\end{array}\right]
)
\ \right|\ \|x\|=1, \xi_j\in\mathbb{R}(j=2,4,6)\right\},$$
$$L_2(\cong\mathrm{T}^{\ast\perp}S^1)=\left\{
\left.(
\left[\begin{array}{c}
x_1\\
0\\
x_3\\
0\\
0\\
0
\end{array}\right],
\left[\begin{array}{c}
0\\
\xi_2\\
0\\
\xi_4\\
\xi_5\\
\xi_6
\end{array}\right]
)
\ \right|\ \|x\|=1, \xi_j\in\mathbb{R}(j=2,4,5,6)\right\}.$$

Because these $S^2$ and $S^1$ are totally geodesic submanifolds of $S^5$, they are austere submanifolds. Hence their conormal bundles $\mathrm{T}^{\ast\perp}S^2$ and $\mathrm{T}^{\ast\perp}S^1$ are special Lagrangian submanifolds of $\mathrm{T}^\ast S^5$.
We use the polar coordinates $x_1=\cos\varphi_1\cos\varphi_2, x_3=\cos\varphi_1\sin\varphi_2, x_5=\sin\varphi_1$ for $L_1$ and $x_1=\cos\varphi, x_3=\sin\varphi$ for $L_2$.

Let $H$ be the $U(1)$-action of the Hopf-fibration $S^5\to\mathbb{C}P^2$, that is, the diagonal $U(1)\cong SO(2)$-action represented as follows:
$$H(\cong SO(2))=\left\{\left.\left[
\begin{array}{c|c|c}
h&&\\ \hline
&h&\\ \hline
&&h
\end{array}
\right]\in GL(6,\mathbb{R})
\ \right|\ 
h\in SO(2)\right\}.$$
The Lie algebra $\mathfrak{h}$ is given as follows:
$$\mathfrak{h}(\cong \mathfrak{so}(2))=\mathrm{span}\{\eta\},$$
here $\eta = \xi_{12}+\xi_{34}+\xi_{56}$ and $\xi_{ij}$ is as mentioned at the beginning of this section. Note that the isotropy subgroup of this $SO(2)$-action is trivial at any point $p\in L_1$ and $L_2$. Hence the condition (Imm-istp) holds for any point $p\in L_1$ and $L_2$.

We obtain an explicit expression of the moment map (\ref{na}) by direct calculations.
\begin{lem}\label{Claim.R}
Define $\mu_\eta$ by $\mu_\eta(z)=\langle\mu(z),\eta\rangle$ for $z\in\Phi(L_j)$ $(j=1,2)$. Then $\mu_\eta(z)$ equals to
$$
\left\{
\begin{array}{lc}
-\mathcal{K}(\|\xi\|)(\cos\varphi_1\cos\varphi_2\xi_2 + \cos\varphi_1\sin\varphi_2\xi_4+\sin\varphi_1\xi_6) &\mathrm{on}\, \Phi(L_1)\backslash\{\|\xi\|=0\},\\
-\mathcal{K}(\|\xi\|)(\cos\varphi\xi_2 + \sin\varphi\xi_4) &\mathrm{on}\, \Phi(L_2)\backslash\{\|\xi\|=0\}.
\end{array}
\right.
$$
Here,
$$\mathcal{K}(\|\xi\|)=\frac{u^\prime(\cosh(2\|\xi\|))\sinh(2\|\xi\|)}{\|\xi\|},$$
and $u$ is the function defined by $U(t) = u(\cosh t)$ for $U$ which is a solution of the ordinary differential equation (\ref{to}), that gives the K\"{a}hler potential of the Stenzel metric.
\end{lem}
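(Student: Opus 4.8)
The statement to be proved is Lemma \ref{Claim.R}, an explicit formula for the moment map $\mu_\eta = \langle \mu(\cdot), \eta\rangle$ restricted to $\Phi(L_1)$ and $\Phi(L_2)$, where $\eta = \xi_{12}+\xi_{34}+\xi_{56}$. The approach is a direct substitution into the formula (\ref{na}): $(\mu(z))(X) = u^\prime(r^2)\, Iz\centerdot Xz$. So the entire proof is a computation; there is no conceptual obstacle, only bookkeeping, but one must handle the image of $\Phi$ carefully.

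First I would compute $z = \Phi(x,\xi)$ for a point $(x,\xi)\in L_j$. From the definition of $\Phi$,
$$
z = \cosh(\|\xi\|)\,x + \sqrt{-1}\,\frac{\sinh(\|\xi\|)}{\|\xi\|}\,\xi,
$$
so the real part of $z$ is $\cosh(\|\xi\|)x$ and the imaginary part is $\frac{\sinh(\|\xi\|)}{\|\xi\|}\xi$. Since $I$ is the standard complex structure on $\mathbb{C}^{n+1}$ (inherited by $Q^n$), $Iz = \sqrt{-1}\,z$ has real part $-\frac{\sinh(\|\xi\|)}{\|\xi\|}\xi$ and imaginary part $\cosh(\|\xi\|)x$. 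The matrix $X = \eta = \xi_{12}+\xi_{34}+\xi_{56}$ acts on $\mathbb{C}^6$ by mixing coordinates $(1,2)$, $(3,4)$, $(5,6)$ pairwise; writing $Xz$ out componentwise and then taking the real inner product $Iz \centerdot Xz$ on $\mathbb{C}^6 \cong \mathbb{R}^{12}$, many terms cancel because on $L_j$ either $x$ or $\xi$ has vanishing entries in complementary slots (for $L_1$, $x$ is supported on odd indices $1,3,5$ and $\xi$ on even indices $2,4,6$; for $L_2$, additionally $x_5 = 0$). The surviving terms combine, using $\cosh(t)\sinh(t) = \tfrac12\sinh(2t)$ and $r^2 = \|z\|^2 = \cosh^2(\|\xi\|)+\sinh^2(\|\xi\|) = \cosh(2\|\xi\|)$, into
$$
\mu_\eta(z) = -\,\frac{u^\prime(\cosh(2\|\xi\|))\,\sinh(2\|\xi\|)}{\|\xi\|}\,\big(x\centerdot\xi'\big)
$$
for a suitable reindexing; one then substitutes the polar coordinates $x_1 = \cos\varphi_1\cos\varphi_2$, etc., on $L_1$ and $x_1 = \cos\varphi$, $x_3 = \sin\varphi$ on $L_2$ to get the stated expressions, with $\mathcal{K}(\|\xi\|) = u^\prime(\cosh(2\|\xi\|))\sinh(2\|\xi\|)/\|\xi\|$.

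The only point requiring care — and hence the ``main obstacle,'' though it is minor — is the behavior at the zero section $\{\|\xi\| = 0\}$: the factor $\sinh(2\|\xi\|)/\|\xi\|$ is of the form $0/0$ there and $\Phi$ collapses the cotangent directions, so I restrict to $\Phi(L_j)\backslash\{\|\xi\|=0\}$ exactly as in the statement, where $\|\xi\| > 0$ and all expressions are smooth. I would also double-check the sign: the real inner product convention on $\mathbb{C}^{n+1}$ means $Iz\centerdot Xz$ picks up a definite sign from pairing the real part of $Iz$ (which is $-\frac{\sinh\|\xi\|}{\|\xi\|}\xi$) against the imaginary-coordinate shifts produced by $X$, and against the imaginary part of $Iz$ (which is $\cosh(\|\xi\|)x$); tracking these two contributions and using antisymmetry of $\xi_{ij}$ yields the overall minus sign in front. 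Everything else is routine arithmetic.
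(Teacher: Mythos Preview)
Your proposal is correct and follows exactly the approach the paper takes: the paper states this lemma without proof, preceded only by the sentence ``We obtain an explicit expression of the moment map (\ref{na}) by direct calculations,'' and your write-up is precisely that direct calculation carried out in full. The bookkeeping (sign from the real inner product of $Iz$ against $\eta z$, the identity $r^2=\cosh(2\|\xi\|)$, and the restriction to $\|\xi\|\neq 0$) is handled correctly.
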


Under these preparations, we obtain the following:
\begin{prop}\label{Claim.S}
Let $(M,I,\omega_\mathrm{Stz},\Omega_\mathrm{Stz},L_j,H)$ be as above. Let $V^{(j)}_c := L_j\cap\mu^{-1}(c)$ for each $c\in\mathfrak{h}^\ast$ and $j=1, 2$. 
\begin{enumerate}\setlength{\labelsep}{1em}\setlength{\itemindent}{1em}
\item[(1)]$H\cdot V^{(1)}_c$ is a special Lagrangian submanifold for any $c\in\mathfrak{h}^\ast$ such that $V^{(1)}_c\neq\emptyset$.
\item[(2)]$H\cdot V^{(2)}_c$ is a special Lagrangian submanifold for any $c\in\mathfrak{h}^\ast$ such that $V^{(2)}_c\neq\emptyset$.
\end{enumerate}
\end{prop}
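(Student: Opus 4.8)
The plan is to verify, for each $j=1,2$, the hypotheses of the special Lagrangian construction corollaries and then invoke them: I expect that for $j=1$ the honest perpendicularity (LagAng-$H$)$^\prime$ holds, so Corollary \ref{Claim.Z} applies, while for $j=2$ only the generalized condition (LagAng-$H$) holds and Corollary \ref{Claim.M} applies. Most hypotheses are already available: (Imm-istp) holds because the diagonal $U(1)$-action has trivial isotropy on $L_1$ and on $L_2$, so $K=\{e\}$, as noted above; (Istp-cnt) is automatic since $H=U(1)$ is abelian, whence $Z(\mathfrak h^\ast)=\mathfrak h^\ast$; $a_H=0$ since $SO(6)$, and therefore its subgroup $H$, preserves the Stenzel Calabi--Yau structure, by Corollary \ref{Claim.E}; and $\theta\equiv\mathrm{const.}$ on $V^{(j)}_c$ because $L_j=\mathrm{T}^{\ast\perp}N_j$ with $N_j$ a totally geodesic sphere is special Lagrangian. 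It therefore remains to verify (Lag-dim), that $V^{(j)}_c$ is a $4$-dimensional submanifold, and the perpendicularity conditions.

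For both of these I would pass to the quadric model $Q^5\subset\mathbb C^6$ via Sz\"{o}ke's diffeomorphism $\Phi$: there the $U(1)$-action is the restriction of a complex-linear block rotation, so $\eta^\#_z=\eta z$ (matrix times vector), and the explicit formula for $\Phi$ shows that $\Phi(L_1)$ is the locus in $Q^5$ where $z_1,z_3,z_5$ are real and $z_2,z_4,z_6$ are purely imaginary, while $\Phi(L_2)$ is the locus where $z_1,z_3$ are real and $z_2,z_4,z_5,z_6$ are purely imaginary. Since $\eta$ is skew-symmetric, $\sqrt{-1}\,\eta z\in T_zQ^5$ always; and because $L_j$ is Lagrangian, $\eta^\#_z\perp_g \mathrm{T}_zL_j$ is equivalent to $I\eta^\#_z=\sqrt{-1}\,\eta z\in \mathrm{T}_zL_j$. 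For $L_1$ one reads off that $\sqrt{-1}\,\eta z$ has precisely the reality type cutting out $\Phi(L_1)$, hence lies in $\mathrm{T}_z\Phi(L_1)$; this yields (LagAng-$H$)$^\prime$. For $L_2$, $\sqrt{-1}\,\eta z$ violates the reality condition in the $z_5$- and $z_6$-slots, so $\eta^\#_z$ has a nontrivial tangential part: writing $\eta^\#_z=X+Y$ with $X=(-z_2,z_1,-z_4,z_3,0,0)$ and $Y=(0,0,0,0,-z_6,z_5)$, one checks (via the reality types, and that $X,Y\in T_zQ^5$) that $X\in\mathrm{T}^\perp_zL_2$ and $Y\in\mathrm{T}_zL_2$. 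To get (LagAng-$H$)(i) I must then show $Y\in\mathrm{T}_zV^{(2)}_c$; since $\mathrm{T}_zV^{(2)}_c=\{v\in\mathrm{T}_zL_2:\omega_z(X,v)=0\}$ by the moment-map relation (here $X=(\eta^\#_z)^\perp$), this amounts to $\omega_z(X,Y)=0$, which I would verify from the explicit Stenzel form $\omega_{\mathrm{Stz}}=\sqrt{-1}\sum g_{i\bar j}\,dz_i\wedge d\bar z_j$ with $g_{i\bar j}=u'(r^2)\delta_{ij}+u''(r^2)\bar z_iz_j$: the $u'$-term vanishes because $X$ and $Y$ have disjoint supports, and the $u''$-term vanishes because $\sum_k z_k\bar Y_k=0$ on $\Phi(L_2)$, using that $z_5,z_6$ are imaginary there. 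Condition (LagAng-$H$)(ii) is immediate because $X=(\eta^\#_z)^\perp$ never vanishes on $\Phi(L_2)$ (otherwise $z_1=\dots=z_4=0$, impossible on $Q^5$ with $z_5,z_6$ imaginary), so $\eta^\#_z\notin\mathrm{T}_zL_2\supseteq\mathrm{T}_zV^{(2)}_c$. Finally, for (Lag-dim), the moment-map relation gives $d(\mu_\eta|_{L_j})_z(v)=-\omega_z((\eta^\#_z)^\perp,v)$ for $v\in\mathrm{T}_zL_j$; testing against $v=I(\eta^\#_z)^\perp\in\mathrm{T}_zL_j$ gives $-g_z(I(\eta^\#_z)^\perp,I(\eta^\#_z)^\perp)\neq0$ since $(\eta^\#_z)^\perp\neq0$, so $\mu_\eta|_{L_j}$ is a submersion, every nonempty $V^{(j)}_c$ is a smooth $4$-manifold, and $\dim(H/K)+\dim V^{(j)}_c=1+4=5$, the required equality.

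With all hypotheses in place, Corollary \ref{Claim.Z} (for $j=1$) and Corollary \ref{Claim.M} (for $j=2$) show that $\phi_c\colon(H/K)\times V^{(j)}_c\to M$ is a special Lagrangian immersion, and hence $H\cdot V^{(j)}_c$ is a special Lagrangian submanifold. I expect the main obstacle to be the verification of (LagAng-$H$)(i) for $L_2$: this generalized perpendicularity is not a consequence of symplectic linear algebra alone and genuinely uses the particular shape of the Stenzel metric --- precisely, the vanishing of the correction term $u''(r^2)\bar z_iz_j$ along $\Phi(L_2)$ --- which is exactly what makes the orbit direction ``generalized perpendicular'' to $L_2$ while not being honestly perpendicular. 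The rest is bookkeeping with the reality structures that cut out $\Phi(L_1)$ and $\Phi(L_2)$, together with the facts about the Stenzel data recorded in Subsection \ref{Section.E.1}.
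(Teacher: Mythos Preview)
Your proposal is correct and follows the paper's strategy exactly: verify the hypotheses of Corollary~\ref{Claim.Z} for $j=1$ and of Corollary~\ref{Claim.M} for $j=2$, using the same decomposition $\eta^\#_z=X+Y$ into normal and tangential parts for $L_2$. The differences are in execution. You identify $\Phi(L_j)$ with the intersection of $Q^5$ and the real linear subspace $W_j$ cut out by the reality conditions, so that $T_z\Phi(L_j)=W_j\cap T_zQ^5$; this reduces the check $I\eta^\#_z\in T_z\Phi(L_1)$ (resp.\ $IX\in T_z\Phi(L_2)$) to reading off reality types plus the quadric tangency $\sum z_kv_k=0$, and it avoids the explicit coordinate basis $\partial/\partial\varphi_i$, $\partial/\partial\xi_j$ and the linear system~\eqref{eqa} that the paper solves. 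For the step $Y\in T_zV^{(2)}_c$, the paper uses the explicit moment map of Lemma~\ref{Claim.R} to show $d\mu_\eta(Y)=0$, while you compute $\omega(X,Y)=0$ directly from the Hermitian matrix $g_{i\bar j}=u'\delta_{ij}+u''\bar z_iz_j$; these are equivalent via the moment-map relation. Finally, your submersion argument $d(\mu_\eta|_{L_j})(IX)=-\omega(X,IX)=-\|X\|_g^2\neq0$ (valid because $IX\in T_z\Phi(L_j)$ and $X\neq0$ everywhere) is more uniform than the paper's gradient computation and handles the zero section $\{\|\xi\|=0\}$ without a separate case analysis. One small imprecision: it is not that the $u''$-correction ``vanishes along $\Phi(L_2)$'' but rather that $\sum_k z_k\bar Y_k=0$ there (since $z_5,z_6$ are purely imaginary), which kills the $u''$-contribution to $h(X,Y)$.
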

\begin{proof}
Note that $Z(\mathfrak{h}^\ast) = \mathfrak{h}^\ast$ because $H$ is abelian. Thus (Istp-cnt) automatically holds. The condition (Imm-istp) also holds as mentioned above. Therefore we see that (Istp-cnt) and (Imm-istp) hold in the any case of (1) and (2).

(1) First we show the proposition above for $j=1$. This proof is based on Corollary \ref{Claim.Z}.
We will show in order (1-I) the perpendicular condition: the $H$-action satisfies (LagAng-$H$)$^\prime$ on $L_1$, and (1-II) the submanifold condition: $V^{(1)}_c\neq\emptyset$ is a submanifold of $M$ and (Lag-dim) holds for $(V^{(1)}_c, H, K)$.

(1-I) First we assume $\|\xi\|\neq 0$. By direct calculations, for $z\in\Phi(L_1)$, the fundamental vector $\eta_z^\#$ and $I_z\eta_z^\#$ are given as follows:
$$
\eta_z^\#
=
\cosh(\|\xi\|)
\left[\begin{array}{c}
0\\
\cos\varphi_1\cos\varphi_2\\
0\\
\cos\varphi_1\sin\varphi_2\\
0\\
\sin\varphi_1
\end{array}\right]
+
\sqrt{-1}\frac{\sinh(\|\xi\|)}{\|\xi\|}
\left[\begin{array}{c}
-\xi_2\\
0\\
-\xi_4\\
0\\
-\xi_6\\
0
\end{array}\right],
$$
$$I_z\eta^\#_z=\frac{\sinh(\|\xi\|)}{\|\xi\|}
\left[\begin{array}{c}
\xi_2\\
0\\
\xi_4\\
0\\
\xi_6\\
0
\end{array}\right]
+\sqrt{-1}\cosh(\|\xi\|)
\left[\begin{array}{c}
0\\
\cos\varphi_1\cos\varphi_2\\
0\\
\cos\varphi_1\sin\varphi_2\\
0\\
\sin\varphi_1
\end{array}\right].$$
On the other hand, using the coordinates above, we have a basis of $\mathrm{T}_z\Phi(L_1)$ as follows:
$$
\begin{aligned}
&\frac{\partial}{\partial\varphi_1}=\cosh(\|\xi\|)
(
-\sin\varphi_1\cos\varphi_2\mathbf{e}_1
-\sin\varphi_1\sin\varphi_2\mathbf{e}_3
+\cos\varphi_1\mathbf{e}_5
)
,\\
&\frac{\partial}{\partial\varphi_2}=\cosh(\|\xi\|)
(
-\cos\varphi_1\sin\varphi_2\mathbf{e}_1
+\cos\varphi_1\cos\varphi_2\mathbf{e}_3
)
,\\
&\frac{\partial}{\partial\xi_j}=\frac{\sinh(\|\xi\|)}{\|\xi\|}\xi_jx+\sqrt{-1}\left\{\frac{\xi_j}{\|\xi\|^2}\mathcal{F}\xi+\frac{\sinh(\|\xi\|)}{\|\xi\|}\mathbf{e}_j\right\}\quad  (j=2,4,6),\\
\end{aligned}
$$
where
$$
\mathcal{F}=\mathcal{F}(\|\xi\|)=\cosh(\|\xi\|)-\frac{\sinh(\|\xi\|)}{\|\xi\|}.
$$
Since $L_1$ is a Lagrangian submanifold of a K\"{a}hler manifold $M$, it is sufficient for verifying $\eta^\#_z\in\mathrm{T}_z^\perp \Phi(L_1)$ to show $I_z\eta^\#_z \in \mathrm{T}_z\Phi(L_1)$. For generating the imaginary part of $I_z\eta^\#_z$ by $\left(\frac{\partial}{\partial \xi_j}\right)\,(j=2,4,6)$, the following is necessary: there exists $(a_2, a_4, a_6)\in\mathbb{R}^3\backslash\{\mathbf{0}\}$ which satisfies
\begin{equation}\label{eqa}
A
\left[
\begin{array}{c}
a_2\\
a_4\\
a_6
\end{array}
\right]
=
\left[
\begin{array}{c}
\cosh(\|\xi\|)\cos\varphi_1\cos\varphi_2\\
\cosh(\|\xi\|)\cos\varphi_1\sin\varphi_2\\
\cosh(\|\xi\|)\sin\varphi_1
\end{array}
\right],
\end{equation}
where
$$
A
=
\left[
\begin{array}{ccc}
\frac{\xi_2^2}{\|\xi\|^2}\mathcal{F}+\frac{\sinh(\|\xi\|)}{\|\xi\|}
&\frac{\xi_2\xi_4}{\|\xi\|^2}\mathcal{F}
&\frac{\xi_2\xi_6}{\|\xi\|^2}\mathcal{F}\\
\frac{\xi_2\xi_4}{\|\xi\|^2}\mathcal{F}
&\frac{\xi_4^2}{\|\xi\|^2}\mathcal{F}+\frac{\sinh(\|\xi\|)}{\|\xi\|}
&\frac{\xi_4\xi_6}{\|\xi\|^2}\mathcal{F}\\
\frac{\xi_2\xi_6}{\|\xi\|^2}\mathcal{F}
&\frac{\xi_4\xi_6}{\|\xi\|^2}\mathcal{F}
&\frac{\xi_6^2}{\|\xi\|^2}\mathcal{F}+\frac{\sinh(\|\xi\|)}{\|\xi\|}
\end{array}
\right].
$$
With the use of series expansion of hyperbolic functions, we can see rank$A=3$ if $\|\xi\|\neq 0$. Hence (\ref{eqa}) has a non-trivial solution for each $\|\xi\|\neq 0$. For this solution $(a_2, a_4, a_6)$, we can verify that there exists $(b_1, b_2)\in\mathbb{R}^2$ which satisfies
$$
b_1\frac{\partial}{\partial\varphi_1}+b_2\frac{\partial}{\partial\varphi_2}+a_2\frac{\partial}{\partial\xi_2}+a_4\frac{\partial}{\partial\xi_4}+a_6\frac{\partial}{\partial\xi_6}
=I_z\eta^\#_z
$$
by using the relation (\ref{eqa}).
Thus if $\|\xi\|\neq 0$, (LagAng-$H$)$^\prime$ holds.

When $\|\xi\|=0$, by taking a limit $\|\xi\|\to 0$, we have
$$\frac{\partial}{\partial\xi_j}\to\sqrt{-1}\mathbf{e}_j\quad (j=2,4,6).$$
Thus we can also verify that $I_z\eta^\#_z\in\mathrm{T}_z\Phi(L_1)$ when $\|\xi\|=0$.

(1-II) Note that $\mu(L_1\cap\{\|\xi\|=0\})=0$. When $\|\xi\|\neq 0$, we use the following fact: There exists a neighborhood $U_p$ around $p\in V^{(1)}_c$ in $L_1$ such that $V^{(1)}_c\cap U_p$ is a submanifold of $L_1$ (therefore of $M$), if $(\nabla \mu_\eta)_p\neq\mathbf{0}\in\mathrm{T}_pL_1$, and then $\dim V^{(1)}_c = \dim L_1- 1=5-1=4$. Here $\nabla$ is the gradient with respect to the induced metric $\iota^\ast g_{\mathrm{Stz}}$ by the inclusion map $\iota:L_1\to M$. Using the properties of Stenzel's K\"{a}hler potential $u$, we can verify that $\nabla\mu_\eta\neq\mathbf{0}$ on $L_1\backslash\{\|\xi\|=0\}$ by direct calculations. When $\|\xi\|=0$, it is sufficient to verify $V^{(1)}_0$ is a submanifold of $M$. By the expression of the moment map in Lemma \ref{Claim.R}, we have
$$
V^{(1)}_0=\left\{\left.(
\left[
\begin{array}{c}
x_1\\
0\\
x_3\\
0\\
x_5\\
0
\end{array}
\right],
\left[
\begin{array}{c}
0\\
\xi_2\\
0\\
\xi_4\\
0\\
\xi_6
\end{array}
\right]
)
\ \right|\ 
\|x\|=1, \left[
\begin{array}{c}
x_1\\
x_3\\
x_5
\end{array}
\right]
\centerdot
\left[
\begin{array}{c}
\xi_2\\
\xi_4\\
\xi_6
\end{array}
\right]=0\right\}.
$$
This is diffeomorphic to $\mathrm{T}S^2$. Therefore $V^{(1)}_c\neq\emptyset$ is a submanifold of $M$ for any $c\in\mathfrak{h}^\ast$, and (Lag-dim) holds for $(V^{(1)}_c, H, K)$. Thus we have proven (1) of the proposition.

(2) This proof is based on Corollary \ref{Claim.M}.
(2-I) the generalized perpendicular condition: the $H$-action satisfies (LagAng-$H$) on $L_2$. To show it, first we assume $\|\xi\|\neq 0$. By direct calculations, we have
$$
\eta_z^\#
=
\cosh(\|\xi\|)
\left[
\begin{array}{c}
0\\
\cos\varphi\\
0\\
\sin\varphi\\
0\\
0
\end{array}
\right]
+
\sqrt{-1}\frac{\sinh(\|\xi\|)}{\|\xi\|}
\left[
\begin{array}{c}
-\xi_2\\
0\\
-\xi_4\\
0\\
-\xi_6\\
\xi_5
\end{array}
\right]
$$
We set the following strategy. First we decompose $I_z\eta^\#_z$ as follows:
$$I_z\eta^\#_z=I_z(\eta^\#_z)_1+I_z(\eta^\#_z)_2,$$
here
$$I_z(\eta^\#_z)_1=\frac{\sinh(\|\xi\|)}{\|\xi\|}
\begin{bmatrix}
\xi_2\\
0\\
\xi_4\\
0\\
0\\
0
\end{bmatrix}
+\sqrt{-1}\cosh(\|\xi\|)
\begin{bmatrix}
0\\
\cos\varphi\\
0\\
\sin\varphi\\
0\\
0
\end{bmatrix}
,
\ I_z(\eta^\#_z)_2=\frac{\sinh(\|\xi\|)}{\|\xi\|}
\begin{bmatrix}
0\\
0\\
0\\
0\\
\xi_6\\
-\xi_5
\end{bmatrix}.$$
Then assume that $I_z(\eta^\#_z)_1\in\mathrm{T}_z\Phi(L_2)$. Since $I_z(\eta^\#_z)_2$ clearly has no $\mathrm{T}_z\Phi(L_2)$-components, we see that the decomposition above is a direct decomposition with respect to $\mathrm{T}_zQ^n=\mathrm{T}_z\Phi(L_2)\oplus\mathrm{T}_z^\perp\Phi(L_2)$. Since
$$
(\eta^\#_z)_2=\sqrt{-1}\frac{\sinh(\|\xi\|)}{\|\xi\|}\left[
\begin{array}{c}
0\\
0\\
0\\
0\\
-\xi_6\\
\xi_5
\end{array}
\right]
$$
and $\mu$ depend neither on fifth nor sixth component of the imaginary part of $\mathbb{C}^{n+1}\cong \mathrm{T}_z\mathbb{C}^{n+1}\supset\mathrm{T}_zQ^n$, we have
$$\langle(d\mu_\eta)_z,(\eta^\#_z)_2\rangle=0,$$
namely $(\eta^\#_z)_2\in\mathrm{T}_z\mu^{-1}(\mu(z))$.
Hence we have that $(\eta^\#_z)_2\in\mathrm{T}_z\mu^{-1}(\mu(z))\cap\mathrm{T}_z\Phi(L_2)=\mathrm{T}_z\Phi(V^{(2)}_{\mu(z)})$.
Noting that $(\eta^\#_z)_1\neq\mathbf{0}$ for any $z\in\Phi(L_2)$, we thus see that (LagAng-$H$) holds if $I_z(\eta^\#_z)_1\in\mathrm{T}_z\Phi(L_2)$.
We can verify $I_z(\eta^\#_z)_1\in\mathrm{T}_z\Phi(L_2)$ actually as in the case of (1). Hence (LagAng-$H$) holds at any point $p\in L_2\backslash\{\|\xi\|=0\}$. When $\|\xi\| =0$, we can also compute $I_z\eta^\#_z\in\mathrm{T}_z\Phi(L_2)$ by taking a limit $\|\xi\|\to 0$. This indicates that (LagAng-$H$)$^\prime$ holds at any point $p\in L_2\cap\{\|\xi\| =0\}$ more strictly. Thus we see that (2-I) holds.

(2-II) the submanifold condition: $V^{(2)}_c\neq\emptyset$ is a submanifold of $M$ and (Lag-dim) holds for $(V^{(2)}_c, H, K)$. For $c\neq 0$, we can verify this as in the case of (1) by considering the gradient $\nabla \mu_\eta$. For $c=0$, $V^{(2)}_0$ is expressed as follows: 
$$
V^{(2)}_0=\left\{\left.(
\left[
\begin{array}{c}
x_1\\
0\\
x_3\\
0\\
0\\
0
\end{array}
\right],
\left[
\begin{array}{c}
0\\
\xi_2\\
0\\
\xi_4\\
\xi_5\\
\xi_6
\end{array}
\right]
)
\ \right|\ 
\|x\|=1, \left[
\begin{array}{c}
x_1\\
x_3
\end{array}
\right]
\centerdot
\left[
\begin{array}{c}
\xi_2\\
\xi_4
\end{array}
\right]=0\right\}.
$$
Ignoring fifth and sixth components, this is diffeomorphic to $TS^1\cong S^1\times\mathbb{R}$. Fifth and sixth components constitute a plane unrelated to the base manifold. Thus we see $V^{(2)}_0\cong S^1\times\mathbb{R}^3$ and $(V^{(2)}_0, H, K)$ satisfies (Lag-dim). Thus we see that (2-II) holds.
\end{proof}
\subsection{The case of $H=SO(2)\times SO(2)\times SO(3), L=\mathrm{T}^{\ast\perp}S^2\subset\mathrm{T}^\ast S^6$}\label{Section.E.2}

Let $M$ be the cotangent bundle of $6$-sphere $\mathrm{T}^\ast S^6$ and $L$ the conormal bundle of a totally geodesic submanifold $S^2$ of $S^6$ as follows:
$$L(= \mathrm{T}^{\ast\perp}S^2)=\left\{\left.
(
\left[
\begin{array}{c}
x_1\\
0\\
x_3\\
0\\
x_5\\
0\\
0
\end{array}
\right],
\left[
\begin{array}{c}
0\\
\xi_2\\
0\\
\xi_4\\
0\\
\xi_6\\
\xi_7
\end{array}
\right]
)
\ \right|\ \|x\|=1, \xi_j\in\mathbb{R} (j=2,4,6,7)\right\}.$$

Define $H$ as follows:
$$
\begin{aligned}
&H (\cong SO(2)\times SO(2)\times SO(3))\\
=&\left\{\left.
\left[
\begin{array}{c|c|c}
h_1&0&0\\ \hline
0&h_2&0\\ \hline
0&0&h_3
\end{array}
\right]\in GL(7,\mathbb{R})
\ \right|\ 
h_1, h_2\in SO(2), h_3\in SO(3)
\right\}.
\end{aligned}
$$
Note that $H$ is non-abelian.
The Lie algebra $\mathfrak{h}$ of $H$ is given as follows:
$$
\mathfrak{h}(\cong\mathfrak{so}(2)\oplus\mathfrak{so}(2)\oplus\mathfrak{so}(3))=\mathrm{span}\{\xi_{12},\xi_{34},\xi_{56},\xi_{57},\xi_{67}\},
$$
here $\xi_{ij}$ is as mentioned at the beginning of this section.

We obtain an explicit expression of the moment map of (\ref{na}) by direct calculations. Define $\mu_{ij}$ for the basis $(\xi_{12},\xi_{34},\xi_{56},\xi_{57},\xi_{67})$ of $\mathfrak{h}$ and $z\in\Phi(L)$ by $\mu_{ij}(z)=\langle \mu(z),\xi_{ij}\rangle$.

\begin{lem}\label{Claim.P}
For $(M, I, \omega, H, \mu)$ above, and $z\in\Phi(L)\backslash\{\|\xi\|=0\}$, we have
$$
\begin{aligned}
&\mu_{12}(z)=-\mathcal{K}(\|\xi\|)\cos\varphi_1\cos\varphi_2\xi_2,\\
&\mu_{34}(z)=-\mathcal{K}(\|\xi\|)\cos\varphi_1\sin\varphi_2\xi_4,\\
&\mu_{56}(z)=-\mathcal{K}(\|\xi\|)\sin\varphi_1\xi_6,\\
&\mu_{57}(z)=-\mathcal{K}(\|\xi\|)\sin\varphi_1\xi_7,\\
&\mu_{67}(z)\equiv 0.
\end{aligned}
$$
Here, we use the polar coordinates $x_1=\cos\varphi_1\cos\varphi_2, x_3=\cos\varphi_1\sin\varphi_2, x_5=\sin\varphi_1$, and
$$\mathcal{K}(\|\xi\|)=\frac{u^\prime(\cosh(2\|\xi\|))\sinh(2\|\xi\|)}{\|\xi\|}.$$
\end{lem}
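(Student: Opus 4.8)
The plan is to substitute the explicit form of $z=\Phi(x,\xi)$ into the moment-map formula (\ref{na}) and reduce the problem to elementary linear algebra on $\mathbb{R}^{7}$, in complete parallel with the computation behind Lemma \ref{Claim.R}. Write $a=\cosh(\|\xi\|)$ and $b=\sinh(\|\xi\|)/\|\xi\|$, so that $z=ax+\sqrt{-1}\,b\xi$; since the complex structure $I$ on $Q^{n}$ is the restriction of the standard one on $\mathbb{C}^{n+1}$, we have $Iz=\sqrt{-1}\,z=-b\xi+\sqrt{-1}\,ax$. First I would compute $r^{2}=\|z\|^{2}=a^{2}\|x\|^{2}+b^{2}\|\xi\|^{2}=\cosh^{2}(\|\xi\|)+\sinh^{2}(\|\xi\|)=\cosh(2\|\xi\|)$, using $\|x\|=1$; hence $u'(r^{2})=u'(\cosh(2\|\xi\|))$.

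Next, for $X\in\mathfrak{so}(7)$ the fundamental vector at $z$ is $Xz=aXx+\sqrt{-1}\,bX\xi$, so expanding the canonical real inner product on $\mathbb{C}^{7}=\mathbb{R}^{7}\oplus\sqrt{-1}\,\mathbb{R}^{7}$ gives
$$Iz\centerdot Xz=-ab\,(\xi\centerdot Xx)+ab\,(x\centerdot X\xi).$$
Using skew-symmetry of $X$, which yields $x\centerdot X\xi=-\xi\centerdot Xx$, together with $2ab=\sinh(2\|\xi\|)/\|\xi\|$, this collapses to $Iz\centerdot Xz=-(\sinh(2\|\xi\|)/\|\xi\|)\,(\xi\centerdot Xx)$. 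Multiplying by $u'(\cosh(2\|\xi\|))$ yields the compact expression $\mu(z)(X)=-\mathcal{K}(\|\xi\|)\,\xi\centerdot Xx$, valid on $\Phi(L)\setminus\{\|\xi\|=0\}$, which is precisely the locus where $\mathcal{K}$ (with its factor $1/\|\xi\|$) and the polar coordinates are defined.

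It then remains to evaluate $\xi\centerdot\xi_{ij}x$ for the five basis elements $\xi_{12},\xi_{34},\xi_{56},\xi_{57},\xi_{67}$ of $\mathfrak{h}$. From $\xi_{ij}=E_{ji}-E_{ij}$ one reads off $\xi_{ij}x=x_{i}\mathbf{e}_{j}-x_{j}\mathbf{e}_{i}$, hence $\xi\centerdot\xi_{ij}x=x_{i}\xi_{j}-x_{j}\xi_{i}$. For a point of $L=\mathrm{T}^{\ast\perp}S^{2}$ the only nonzero coordinates are $x_{1},x_{3},x_{5}$ and $\xi_{2},\xi_{4},\xi_{6},\xi_{7}$; substituting $x_{1}=\cos\varphi_{1}\cos\varphi_{2}$, $x_{3}=\cos\varphi_{1}\sin\varphi_{2}$, $x_{5}=\sin\varphi_{1}$ gives $\mu_{12}=-\mathcal{K}x_{1}\xi_{2}$, $\mu_{34}=-\mathcal{K}x_{3}\xi_{4}$, $\mu_{56}=-\mathcal{K}x_{5}\xi_{6}$, $\mu_{57}=-\mathcal{K}x_{5}\xi_{7}$, and $\mu_{67}=-\mathcal{K}(x_{6}\xi_{7}-x_{7}\xi_{6})=0$, which is exactly the asserted list.

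There is no real obstacle here: everything is a direct computation once (\ref{na}) and the diffeomorphism $\Phi$ are available. The only points demanding a little care are the factor $2$ produced by combining the real and imaginary parts in $Iz\centerdot Xz$, the sign coming from the skew-symmetry of $X$, and the restriction to $\|\xi\|\neq 0$; at $\|\xi\|=0$ one has $\mu\equiv 0$ on $L$ (as already used in Section \ref{Section.E.3}), so that case needs no separate formula.
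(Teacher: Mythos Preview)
Your proof is correct and is precisely the ``direct calculation'' the paper alludes to before stating the lemma; the paper does not spell out a separate proof. Each step---the evaluation of $r^{2}=\cosh(2\|\xi\|)$, the reduction $Iz\centerdot Xz=-2ab\,(\xi\centerdot Xx)$ via skew-symmetry, and the case-by-case evaluation of $\xi\centerdot\xi_{ij}x$ on $L$---checks out.
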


We obtain the following result:
\begin{prop}\label{Claim.Q}
Let $(M, I, \omega_{\mathrm{Stz}}, \Omega_{\mathrm{Stz}},L, H)$ be as above. Define a rank two subbundle $\hat{L}$ of $L$ as follows:
$$\hat{L}
=
\left\{\left.
(
\left[
\begin{array}{c}
x_1\\
0\\
x_3\\
0\\
x_5\\
0\\
0
\end{array}
\right],
\left[
\begin{array}{c}
0\\
\xi_2\\
0\\
\xi_4\\
0\\
0\\
0
\end{array}
\right]
)
\ \right|\ 
\|x\|=1, \xi_j\in\mathbb{R}\ (j=2,4)\right\}.
$$
Let $\hat{L}^\mathrm{pr}$ be the set of all points $p\in\hat{L}$ such that the isotropy subgroup $H_p$ satisfies $H_p\subset H_q$ for all $q\in\hat{L}$.
For $(c_1,c_2)\in \mathbb{R}^2$, define $V_{(c_1,c_2)}$ and $\hat{V}_{(c_1,c_2)}$ by 
$$
\begin{aligned}
V_{(c_1,c_2)}&=\hat{L}^{\mathrm{pr}}\cap\{p\in M\mid\mu_{12}(p)=c_1, \mu_{34}(p)=c_2, \mu_{ij}(p)=0\},\\
\hat{V}_{(c_1,c_2)}&=\hat{L}\cap\{p\in M\mid\mu_{12}(p)=c_1, \mu_{34}(p)=c_2, \mu_{ij}(p)=0\ \},
\end{aligned}
$$
here $(i,j)=(5,6),(5,7),(6,7)$.
Then for any $(c_1,c_2)\neq(0,0)\in\mathbb{R}^2$ such that $V_{(c_1,c_2)}\neq\emptyset$, $H\cdot V_{(c_1,c_2)}$ is a special Lagrangian submanifold of $M$, and $H\cdot \hat{V}_{(0,0)}$ is a union of five connected special Lagrangian submanifolds of $M$.
\end{prop}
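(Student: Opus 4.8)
The argument is an application of Corollary \ref{Claim.Z}. For a level set $V\subset\mu^{-1}(c)\cap L$ that corollary requires: $a_H=0$; the Lagrangian angle of $L$ constant on $V$; (Imm-istp); (Lag-dim); and the strong perpendicularity (LagAng-$H$)$'$ (whereupon (Istp-cnt) is automatic by Remark \ref{Claim.O}). Two of these inputs are free here. Since $H\subset SO(7)$ acts by linear isometries of $\mathbb{R}^7$, it preserves $I$, $\omega_{\mathrm{Stz}}$ and $\Omega_{\mathrm{Stz}}$, so $a_H=0$ by Corollary \ref{Claim.E}; and because the totally geodesic $S^2\subset S^6$ is austere, $L=\mathrm{T}^{\ast\perp}S^2$ is special Lagrangian by Karigiannis--Min-Oo \cite{Karigiannis and Min-Oo}, so its Lagrangian angle is globally constant, a fortiori on any $V$.

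\textbf{Step 1: perpendicularity on $\hat L$.} I would first establish (LagAng-$H$)$'$ on all of $\hat L$: for $z\in\Phi(\hat L)$ and each generator $\xi\in\{\xi_{12},\xi_{34},\xi_{56},\xi_{57},\xi_{67}\}$ of $\mathfrak{h}$, $\xi^\#_z\in\mathrm{T}^\perp_z\Phi(L)$; equivalently, since $\Phi(L)$ is Lagrangian in the K\"{a}hler manifold $(Q^6,I,\omega_{\mathrm{Stz}})$, $I_z\xi^\#_z\in\mathrm{T}_z\Phi(L)$. At a point of $\Phi(\hat L)$ the sixth and seventh coordinates of $z$ vanish, so $\xi^\#_{67,z}=0$ while $\xi^\#_{56,z},\xi^\#_{57,z}$ involve only $x_5$, and $I_z\xi^\#_{56,z}, I_z\xi^\#_{57,z}$ are immediately seen to be multiples of $\partial/\partial\xi_6,\partial/\partial\xi_7$; for $\xi_{12}$ and $\xi_{34}$ one writes $\mathrm{T}_z\Phi(L)$ in the coordinates $(\varphi_1,\varphi_2,\xi_2,\xi_4,\xi_6,\xi_7)$ exactly as in part (1-I) of the proof of Proposition \ref{Claim.S} and solves a $2\times2$ linear system whose matrix is the analogue of the matrix $A$ there, invertible for $\|\xi\|\neq0$ (with the limiting case $\|\xi\|=0$ handled by $\partial/\partial\xi_j\to\sqrt{-1}\mathbf{e}_j$).

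\textbf{Step 2: the case $(c_1,c_2)\neq(0,0)$.} I would apply Corollary \ref{Claim.Z} to $V_{(c_1,c_2)}\subset\hat L^{\mathrm{pr}}$. On $\hat L$ the conditions $\mu_{56}=\mu_{57}=\mu_{67}=0$ hold automatically, and by Lemma \ref{Claim.P}, $\mu_{12}=-\mathcal{K}(\|\xi\|)x_1\xi_2$ and $\mu_{34}=-\mathcal{K}(\|\xi\|)x_3\xi_4$. Restricting to $\hat L^{\mathrm{pr}}$ gives (Imm-istp) with $K$ the constant principal isotropy---the circle subgroup of the $SO(3)$-factor fixing the $\mathbf{e}_5$-axis---so $\dim H/K=5-1=4$. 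Using the positivity properties of Stenzel's potential $u$ (as in part (1-II) of the proof of Proposition \ref{Claim.S}) one checks that $d\mu_{12}$ and $d\mu_{34}$ are linearly independent along $V_{(c_1,c_2)}$ when it is non-empty, so $V_{(c_1,c_2)}$ is a $2$-dimensional submanifold; then $\dim H/K+\dim V_{(c_1,c_2)}=4+2=6$, i.e. (Lag-dim). Corollary \ref{Claim.Z} yields a special Lagrangian immersion $\phi_c$, and one checks its image $H\cdot V_{(c_1,c_2)}$ is an (embedded) special Lagrangian submanifold.

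\textbf{Step 3: the case $(c_1,c_2)=(0,0)$, and the main difficulty.} By Lemma \ref{Claim.P}, and since $\mathcal{K}(\|\xi\|)=0$ only where $\|\xi\|=0$ (hence $\xi_2=\xi_4=0$), $\hat V_{(0,0)}=\{(x,\xi)\in\hat L:x_1\xi_2=0,\ x_3\xi_4=0\}$. As a set this is the union of five branches: $\{x_5=1\}$ and $\{x_5=-1\}$ (each an $\mathbb{R}^2$-fibre), $\{x_1=0,\xi_4=0\}$ and $\{\xi_2=0,x_3=0\}$ (each $\cong S^1\times\mathbb{R}$), and $\{\xi_2=\xi_4=0\}$ (the zero section over $S^2$). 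The genuinely delicate point is that $\hat V_{(0,0)}$ is not a manifold---its branches meet along lower-dimensional loci---and the $H$-orbit type fails to be constant along the branches, so Corollary \ref{Claim.Z} cannot be applied verbatim. The plan is to stratify each branch by orbit type, apply Step 1 and the dimension count on its principal (open dense) stratum to obtain an immersed special Lagrangian piece from Corollary \ref{Claim.Z}, then argue that each such piece extends smoothly across the singular loci to a closed submanifold---here transparent, e.g. $H\cdot\{\xi=0\}$ is the entire zero section of $\mathrm{T}^\ast S^6$, itself special Lagrangian---and that exactly five distinct such submanifolds result, whose union is $H\cdot\hat V_{(0,0)}$. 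This stratification-and-extension step, rather than the routine computations of Steps 1 and 2, is where the real work lies.
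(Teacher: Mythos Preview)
Your Steps~1 and~2 match the paper's argument essentially verbatim: the perpendicularity check on $\hat L$ is reduced to the $2\times2$ analogue of the matrix $A$ from Proposition~\ref{Claim.S}(1-I), and the submanifold/dimension check for $(c_1,c_2)\neq(0,0)$ is handled exactly as you outline, with $K$ the $SO(2)$ generated by $\xi_{67}$.

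Where you diverge is Step~3, and you are making it harder than it is. The paper does \emph{not} run a stratification-and-extension argument through Corollary~\ref{Claim.Z} on principal strata and then pass to closures. Instead, once the five $2$-dimensional pieces $\hat V_{(0,0)}^{S^2}$, $\hat V_{(0,0),(j)}^{S^1\times\mathbb{R}}$ ($j=1,3$), $\hat V_{(0,0),(\epsilon)}^{\mathbb{R}^2}$ ($\epsilon=\pm1$) are written down, it simply computes each $H$-sweep \emph{as a subset of $\mathrm{T}^\ast S^6$} and recognises it: $H\cdot\hat V_{(0,0)}^{S^2}$ is the full zero section $S^6$; $H\cdot\hat V_{(0,0),(1)}^{S^1\times\mathbb{R}}$ and $H\cdot\hat V_{(0,0),(3)}^{S^1\times\mathbb{R}}$ are the conormal bundles of the totally geodesic $4$-spheres $\{x_1=x_2=0\}$ and $\{x_3=x_4=0\}$; and $H\cdot\hat V_{(0,0),(\epsilon)}^{\mathbb{R}^2}$ is the conormal bundle of the totally geodesic $2$-sphere $\{x_1=x_2=x_3=x_4=0\}$. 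Each of these is special Lagrangian by the Karigiannis--Min-Oo result you already invoked (or, for the zero section, directly). So the ``real work'' you anticipate---smoothly extending immersions across orbit-type jumps---evaporates: the sweeps are already globally identified with smooth, known special Lagrangians, and the failure of (Imm-istp) on the branches is irrelevant because Corollary~\ref{Claim.Z} is not being applied there at all. Your plan would also succeed, but the paper's shortcut is both cleaner and more informative, since it names the resulting submanifolds explicitly.
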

\begin{proof}
The proof for $V_{(c_1,c_2)}$ is based on Corollary \ref{Claim.Z}, and one for $\hat{V}_{(0,0)}$ on direct calculations. As we saw in Remark \ref{Claim.O}, $V_{(c_1,c_2)}$ has to be included in the inverse image of the center $Z(\mathfrak{h}^\ast)$ of $\mathfrak{h}^\ast$ with the moment map $\mu$.
Hence, noting the $\mathfrak{so}(3)$-part of $\mathfrak{h}\cong\mathfrak{so}(2)\oplus\mathfrak{so}(2)\oplus\mathfrak{so}(3)$, we see that we can apply our construction for the part such that $\mu_{ij}(p)=0\ ((i,j)=(5,6),(5,7),(6,7))$ in $L$. This indicates that $\xi_6=\xi_7=0$ is necessary. That is, the place in where we have to check the conditions of Corollary \ref{Claim.N} is $\hat{L}\subset L$.

By the definition of $V_{(c_1,c_2)}$, at any point $p\in V_{(c_1,c_2)}$, the isotropy subgroup $H_p$ is the following one-parameter subgroup $K$ generated by $\xi_{67}$: 
$$
K(\cong SO(2))=\left\{\left.\left[
\begin{array}{c|c}
E_5&\\ \hline
&h\\
\end{array}
\right]
\ \right|\ 
h\in SO(2)\right\},
$$
here $E_5$ is the unit $5\times 5$-matrix. Thus we see that (Imm-istp) holds.

First we prove the proposition for $V_{(c_1,c_2)}$ for $(c_1,c_2)\neq (0,0)$. Since $\mu(\hat{L}\cap\{\|\xi\|=0\})=\mathbf{0}$, we can assume $\|\xi\|\neq 0$ in this case.
As same as Proposition \ref{Claim.S}, conditions we have to check are the followings:  (I) the perpendicular condition: the $H$-action satisfies (LagAng-$H$)$^\prime$ on $\hat{L}\backslash\{\|\xi\|=0\}$, and (II) the submanifold condition: $V_{(c_1,c_2)}\neq\emptyset$ is a submanifold of $M$ and (Lag-dim) holds for $(V_{(c_1,c_2)}, H, K)$. We can verify these in the same way as Proposition \ref{Claim.S}.

Finally we study $\hat{V}_{(0,0)}$ generally rather than $V_{(0,0)}$, including non-principal points. By Lemma \ref{Claim.P}, We obtain that
$$\hat{V}_{(0,0)}
=
\left\{(\left.\left[
\begin{array}{c}
x_1\\
0\\
x_3\\
0\\
x_5\\
0\\
0\\
\end{array}
\right],
\left[
\begin{array}{c}
0\\
\xi_2\\
0\\
\xi_4\\
0\\
0\\
0
\end{array}
\right])
\ \right|\ 
\begin{matrix}
\|x\|=1,\\
\xi_2,\xi_4\in\mathbb{R},\\
x_1\xi_2=x_3\xi_4=0
\end{matrix}
\right\}.
$$
$\hat{V}_{(0,0)}$ is not a smooth manifold. However it is a union, which is not disjoint, of the following five connected manifolds:
$$
\hat{V}_{(0,0)}
=
\hat{V}_{(0,0)}^{S^2}
\cup
\hat{V}_{(0,0), (1)}^{S^1\times\mathbb{R}}
\cup
\hat{V}_{(0,0), (3)}^{S^1\times\mathbb{R}}
\cup
\hat{V}_{(0,0), (1)}^{\mathbb{R}^2}
\cup
\hat{V}_{(0,0), (-1)}^{\mathbb{R}^2},
$$
here
$$
\hat{V}_{(0,0)}^{S^2}=\left\{(\left.
\begin{bmatrix}
x_1\\
0\\
x_3\\
0\\
x_5\\
0\\
0\\
\end{bmatrix}
,\mathbf{0})\ \right|\ \|x\|=1\right\},\quad
\hat{V}_{(0,0),(1)}^{S^1\times\mathbb{R}}=\left\{\left.(
\begin{bmatrix}
0\\
0\\
x_3\\
0\\
x_5\\
0\\
0\\
\end{bmatrix}
,
\begin{bmatrix}
0\\
\xi_2\\
0\\
0\\
0\\
0\\
0
\end{bmatrix}
)
\,\right|
\begin{matrix}
\|x\|=1,\\
\xi_2\in\mathbb{R}
\end{matrix}
\right\},$$
$$
\hat{V}_{(0,0),(3)}^{S^1\times\mathbb{R}}
=
\left\{(\left.
\begin{bmatrix}
x_1\\
0\\
0\\
0\\
x_5\\
0\\
0\\
\end{bmatrix}
,
\begin{bmatrix}
0\\
0\\
0\\
\xi_4\\
0\\
0\\
0
\end{bmatrix}
)
\,\right|
\begin{matrix}
\|x\|=1,\\
\xi_4\in\mathbb{R}
\end{matrix}
\right\},\quad
\hat{V}_{(0,0), (\epsilon)}^{\mathbb{R}^2}
=
\left\{(\left.
\begin{bmatrix}
0\\
0\\
0\\
0\\
\epsilon\\
0\\
0\\
\end{bmatrix}
,
\begin{bmatrix}
0\\
\xi_2\\
0\\
\xi_4\\
0\\
0\\
0
\end{bmatrix}
)
\ \right|\ 
\xi_2,\xi_4\in\mathbb{R}\right\},
$$
and $\epsilon =\pm 1$.
We can see that each set $\hat{V}_{(0,0)}^W$ is a $2$-dimensional connected submanifold of $M$ diffeomorphic to $W$. Each $\hat{V}_{(0,0)}^W$ has non-principal orbits with respect to the action of $H$ on $H\cdot \hat{V}_{(0,0)}^W$. Hence it does not satisfy (Imm-istp). However we can directly verify that each $H\cdot\hat{V}_{(0,0)}^W$ for $\hat{V}_{(0,0)}^{S^2}$, $\hat{V}_{(0,0),(j)}^{S^1\times\mathbb{R}} (j=1,3)$, and $\hat{V}_{(0,0), (\epsilon)}^{\mathbb{R}^2} (\epsilon=\pm 1)$ is a special Lagrangian submanifold of $M$ diffeomorphic to $S^6$, $\mathrm{T}^{\ast\perp}S^4$, and $\mathrm{T}^{\ast\perp}S^2$ respectively.
\end{proof}

We chose $SO(2)\times SO(2)\times SO(3)$ as a Lie group $H$ for the special Lagrangian submanifold $L\subset \mathrm{T}^\ast S^6$ of Proposition \ref{Claim.Q} rather than $SO(2)\times SO(5)$ because of two reasons. First, since the center of the Lie algebra $\mathfrak{h}\cong\mathfrak{so}(2)\oplus\mathfrak{so}(2)\oplus\mathfrak{so}(3)$ has two dimensions, we could obtain two-parameters of special Lagrangian submanifolds $H\cdot V_{(c_1,c_2)}$. Second, for $p, q\in\mathbb{N} $ such that $p+q=n+1$, special Lagrangian submanifolds which are $SO(p)\times SO(q)$-invariant in $(\mathrm{T}^\ast S^n, I, g_{\mathrm{Stz}}, \omega_{\mathrm{Stz}})$ have been already obtained by Hashimoto and Sakai in \cite{Hashimoto and Sakai}, and they showed that such special Lagrangian submanifolds are cohomogeneity one with respect to $SO(p)\times SO(q)$.  In the case of Proposition \ref{Claim.Q}, we can verify that $SO(2)\times SO(2)\times SO(3)$ acts on $H\cdot V_{(c_1,c_2)}$ with cohomogeneity two.



\end{document}